\newcommand\blfootnote[1]{%
  \begingroup
  \renewcommand\thefootnote{}\footnote{#1}%
  \addtocounter{footnote}{-1}%
  \endgroup
}
\def\inf{\operatorname{inf}}
 \newtheorem{lemma}{Lemma}[section]
 \newtheorem{corollary}[lemma]{Corollary}
 \newtheorem{theorem}[lemma]{Theorem}
 \newtheorem{proposition}[lemma]{Proposition}
 \newtheorem{definition}[lemma]{Definition}
 \newtheorem{remark}[lemma]{Remark}
 \newtheorem{example}[lemma]{Example}
\begin{document}

\title{\fontsize{15}{0}\selectfont 
Trace Principle for Riesz Potentials on Herz-Type Spaces and Applications
}
\author{\fontsize{11}{0}\selectfont 
M. Ashraf Bhat and G. Sankara Raju Kosuru$^*$ \\
\fontsize{10}{0}\textit{{Department of Mathematics, Indian
Institute of Technology Ropar, Rupnagar-140001, Punjab, India.}}
}
\date{}
\maketitle

\thispagestyle{empty}

\begin{center}
\textbf{\underline{ABSTRACT}}
\end{center}
We establish trace inequalities for Riesz potentials on Herz-type spaces and discuss the optimality of conditions imposed on specific parameters. We also present some applications in the form of Sobolev-type inequalities, including the Gagliardo-Nirenberg-Sobolev inequality and the fractional integration theorem in the Herz space setting. In addition, we obtain a Sobolev embedding theorem for Herz-type Sobolev spaces.

\textit{Keywords}\textbf{:} Riesz potential; Adams' trace inequality; Herz spaces; Sobolev inequalities.
\blfootnote{$^{*}$Corresponding Author} 
\blfootnote{G. Sankara Raju Kosuru: raju@iitrpr.ac.in} \blfootnote{%
Mohd Ashraf Bhat: ashraf74267@gmail.com} 

2020 \textit{Mathematics Subject Classification:} 31C15, 26A33, 42B35, 46E35.\vspace{1cm}


\section{Introduction and Preliminaries}

The Riesz potential operator $I_{\gamma}$ is an
integral operator defined by the convolution of a function $f$ with the Riesz kernel $K_{\gamma}(x):=|x|^{\gamma-n}$. More precisely, for $n \in \mathbb{N}$ and $0 < \gamma < n$,
$$I_{\gamma}f(x):=\int_{\mathbb{R}^n}\frac{f(y)}{|x-y|^{n-\gamma}}dm(y)~;~x \in \mathbb{R}^n.$$
Here, $f$ is a suitable function, such as a locally integrable function on $\mathbb{R}^n$ $( L^1_{loc}(\mathbb{R}^n) )$, or a function with sufficiently rapid decay at infinity, particularly if $f \in L^p(\mathbb{R}^n)$ with $1 \leq p < \frac{n}{\gamma}$, and $m$ is the Lebesgue measure on $\mathbb{R}^n$. If $\gamma = 2 \neq n$, this integral operator is called the Newtonian potential and is used to describe the potential energy distribution of a system of point masses in classical mechanics or the electrostatic potential created by a charge distribution in physics.

The trace problem for Riesz Potentials deals with finding non-negative (positive) Borel measures $\mu$ on $\mathbb{R}^n$ such that $I_{\gamma}$ maps $\mathcal{F}(\mathbb{R}^n, m)$ boundedly into $\mathcal{F}^{\prime}(\mathbb{R}^n, \mu)$, where $\mathcal{F}(\mathbb{R}^n,m)$ and $\mathcal{F}^{\prime}(\mathbb{R}^n,\mu)$ are function spaces defined over $\mathbb{R}^n$ with respect to measures $m$ and $\mu$ respectively. 
Adams \cite{MR336316, MR287301}  proved that for $1<p_1<p_2<\infty$ and $0<\gamma<\frac{n}{p_1}$, 

\begin{equation}\label{adams-Inequality}
\left \Vert I_{\gamma}f \right \Vert_{L^{p_2}(\mathbb{R}^n, \mu)} \lesssim \left \Vert f \right \Vert_{L^{p_1}(\mathbb{R}^n, m)}
\end{equation}
if and only if $\mu(B) \lesssim [m(B)]^{p_2\left(\frac{1}{p_{1}}-\frac{\gamma}{n}\right)}$ for every ball $B \subset \mathbb{R}^n$.
Here we have used the standard notation $\zeta \lesssim \rho$ (or equivalently $\rho\gtrsim \zeta$) to express that there exists a positive constant $c$, independent of relevant variables, such that $\zeta \leq c \rho$. The inequality (\ref{adams-Inequality}) is not true when $p_1 = p_2$ (see, for example, \cite{MR417774}). Inequalities involving Riesz potentials often provide an important tool for estimating functions in terms of the norms of their derivatives. The wide-ranging applicability of trace inequalities for Riesz potentials has sparked significant interest in recent studies, for instance, see  \cite{MR4327616, MR2028180, MR3718049, MR3836125} and references therein. In the realm of Morrey-Lorentz spaces, the following theorem has been established \cite{MR4327616}.
  \begin{theorem} \label{AI-Morrey}
  Let $1<p_1 \leq q_1 < \infty$ and $1<p_2 \leq q_2< \infty$ satisfy $\frac{p_2}{q_2} \leq \frac{p_1}{q_1}$ for all $1<p_1<p_2<\infty$. Then, the inequality
  $$ \left \Vert I_{\gamma}f \right \Vert_{\mathcal{M}_{p_2,r_2}^{q_2}(\mathbb{R}^n, \mu)} \lesssim \left \Vert f \right \Vert_{\mathcal{M}_{p_1,r_1}^{q_1}(\mathbb{R}^n, m)}$$
  holds true if and only if the measure $\mu$ satisfies $ \mu(B) \lesssim [m(B)]^{q_2\left(\frac{1}{q_1}-\frac{\gamma}{n}\right)}$ for every ball $B \subset \mathbb{R}^n$, given that $~n\left(\frac{1}{q_1}-\frac{1}{q_2}\right) \leq \gamma< \frac{n}{q_1}$ and $1 \leq r_1 < r_2 \leq \infty$ $($ or $r_1=r_2=\infty$ $)$.
  \end{theorem}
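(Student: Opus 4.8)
The plan is to establish the two implications separately: necessity by testing on characteristic functions of balls, and sufficiency by combining a Morrey-adapted Hedberg--Adams pointwise bound with a change-of-measure embedding. For necessity I would substitute $f=\chi_B$ for an arbitrary ball $B=B(x_0,\rho)$. On $B$ one has the elementary lower bound $I_\gamma\chi_B(x)\gtrsim\rho^\gamma$, so restricting the defining supremum of the target norm to the single ball $B$ gives $\|I_\gamma\chi_B\|_{\mathcal{M}_{p_2,r_2}^{q_2}(\mu)}\gtrsim\rho^\gamma\,\mu(B)^{1/q_2}$, since the local Lorentz norm of a constant over $B$ is comparable to $\mu(B)^{1/p_2}$ and the Morrey weight supplies $\mu(B)^{1/q_2-1/p_2}$. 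As $\|\chi_B\|_{\mathcal{M}_{p_1,r_1}^{q_1}(m)}\approx m(B)^{1/q_1}\approx\rho^{n/q_1}$, the assumed inequality forces $\rho^\gamma\mu(B)^{1/q_2}\lesssim\rho^{n/q_1}$, which rearranges to exactly $\mu(B)\lesssim[m(B)]^{q_2(1/q_1-\gamma/n)}$.

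For sufficiency, set $\theta:=1-\gamma q_1/n$, so that $0<\theta\le q_1/q_2$ by the hypotheses $n(1/q_1-1/q_2)\le\gamma<n/q_1$, and observe that the measure condition reads $\mu(B)\lesssim[m(B)]^{\theta q_2/q_1}$. After reducing to $f\ge 0$, the first step is the Morrey-adapted Hedberg--Adams inequality: splitting $I_\gamma f(x)$ at radius $\delta$ into a near part and a far part, the near part is summed over dyadic balls shrinking to $x$ and controlled by $\delta^\gamma Mf(x)$, where $M$ is the Hardy--Littlewood maximal operator, while the far part is summed over expanding dyadic annuli and estimated by $\delta^{\gamma-n/q_1}\|f\|_{\mathcal{M}_{p_1,r_1}^{q_1}(m)}$ using the Lorentz Hölder inequality $\int_{B'}f\le\|f\chi_{B'}\|_{L^{p_1,r_1}}\|\chi_{B'}\|_{L^{p_1',r_1'}}$ together with the Morrey bound $\|f\chi_{B'}\|_{L^{p_1,r_1}(m)}\le m(B')^{1/p_1-1/q_1}\|f\|_{\mathcal{M}_{p_1,r_1}^{q_1}(m)}$; here $\gamma<n/q_1$ is what makes the geometric series converge. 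Optimizing in $\delta$ yields the pointwise bound
$$I_\gamma f(x)\lesssim\|f\|_{\mathcal{M}_{p_1,r_1}^{q_1}(m)}^{\,1-\theta}\,(Mf(x))^{\theta}.$$

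Taking the target norm and using the homogeneity identity $\|(Mf)^\theta\|_{\mathcal{M}_{p_2,r_2}^{q_2}(\mu)}=\|Mf\|_{\mathcal{M}_{\theta p_2,\theta r_2}^{\theta q_2}(\mu)}^{\theta}$, the problem reduces to
$$\|Mf\|_{\mathcal{M}_{\theta p_2,\theta r_2}^{\theta q_2}(\mu)}\lesssim\|f\|_{\mathcal{M}_{p_1,r_1}^{q_1}(m)},$$
after which combining exponents $\,(1-\theta)+\theta=1\,$ closes the argument. I would prove this estimate in two moves: the boundedness of $M$ on the Lebesgue Morrey--Lorentz space $\mathcal{M}_{p_1,r_1}^{q_1}(m)$, valid since $p_1>1$, followed by the change-of-measure embedding $\mathcal{M}_{p_1,r_1}^{q_1}(m)\hookrightarrow\mathcal{M}_{\theta p_2,\theta r_2}^{\theta q_2}(\mu)$. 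The embedding is where every remaining hypothesis is consumed: the aperture condition $p_2/q_2\le p_1/q_1$ matches source and target apertures, the relation $\theta q_2\le q_1$ guarantees the target Morrey index does not exceed the source one, and the measure condition $\mu(B)\lesssim[m(B)]^{\theta q_2/q_1}$ is precisely the characteristic-function test for this embedding.

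I expect the main obstacle to be this change-of-measure embedding in the genuinely Lorentz regime $r_1<r_2$. Controlling the local $L^{\theta p_2,\theta r_2}(\mu)$ quasi-norm of a function by its $m$-Morrey--Lorentz norm requires passing through distribution functions: one covers each level set $\{|g|>\lambda\}\cap B$ by balls, applies the measure condition on each ball, and reassembles the pieces. The strict inequality $r_1<r_2$ is what permits the resulting series in the Lorentz parameter to be summed, upgrading the natural weak-type control into the strong target norm, whereas the degenerate case $r_1=r_2=\infty$ has to be treated separately as a weak-Morrey endpoint. Verifying that the covering losses are absorbed by the Morrey weights, uniformly in $\lambda$ and in the ball $B$, is the technical heart of the proof.
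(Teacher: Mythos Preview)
The paper does not prove this theorem. Theorem~\ref{AI-Morrey} is quoted verbatim from de~Almeida and Lima \cite{MR4327616} as a known background result; the sentence immediately preceding it reads ``In the realm of Morrey--Lorentz spaces, the following theorem has been established \cite{MR4327616}.'' There is therefore no in-paper proof to compare your proposal against.

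That said, your outline is essentially the strategy used in the cited reference: necessity via the test $f=\chi_B$, and sufficiency via the Hedberg--Adams pointwise inequality $I_\gamma f\lesssim \|f\|^{1-\theta}(Mf)^\theta$ followed by maximal-function bounds and a measure-comparison embedding between Morrey--Lorentz scales. Your identification of the change-of-measure embedding (and of the role of $r_1<r_2$ versus the endpoint $r_1=r_2=\infty$) as the technical crux is accurate. If you intend to write this up in full, the place that needs genuine care is exactly where you flagged it: controlling the local $L^{\theta p_2,\theta r_2}(\mu)$ norm by the $m$-Morrey--Lorentz data uniformly over balls, which in \cite{MR4327616} is handled via a separate embedding lemma rather than folded into the main argument.
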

 In particular, this yields the following outcome within the scope of Lorentz spaces (Definition \ref{Lorentz-space}).
 
\begin{corollary} \label{Adams-Inequality-Lorentz}
    If $1<p_1<p_2<\infty$, $n\left(\frac{1}{p_1}-\frac{1}{p_2}\right) \leq \gamma< \frac{n}{p_1}$ and $1 \leq r_1 < r_2 \leq \infty$ $($ or $r_1=r_2=\infty$ $)$. Then
  $$ \left \Vert I_{\gamma}f \right \Vert_{L^{p_2,r_2}(\mathbb{R}^n, \mu)} \lesssim \left \Vert f \right \Vert_{L^{p_1,r_1}(\mathbb{R}^n, m)}$$
  if and only if the measure $\mu$ satisfies $ \mu(B) \lesssim [m(B)]^{p_2\left(\frac{1}{p_1}-\frac{\gamma}{n}\right)}$ for every ball $B \subset \mathbb{R}^n$.
  \end{corollary}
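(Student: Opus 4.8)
The plan is to obtain this corollary as the diagonal case of Theorem \ref{AI-Morrey}. The Morrey--Lorentz space $\mathcal{M}_{p,r}^q(\mathbb{R}^n,\nu)$ reduces to the Lorentz space $L^{p,r}(\mathbb{R}^n,\nu)$ precisely when the Morrey exponent coincides with the integrability exponent, i.e.\ when $q=p$; the two norms then agree up to a multiplicative constant. I would therefore specialize Theorem \ref{AI-Morrey} by setting $q_1=p_1$ and $q_2=p_2$ and show that all of its hypotheses and its conclusion collapse to exactly the statements appearing in the corollary.

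First I would check the parameter constraints. Taking $q_i=p_i$ turns each chain $1<p_i\leq q_i<\infty$ into $1<p_i<\infty$, and the structural requirement $\frac{p_2}{q_2}\leq\frac{p_1}{q_1}$ becomes $1\leq 1$, which holds trivially with equality. The gap condition $n\!\left(\frac{1}{q_1}-\frac{1}{q_2}\right)\leq\gamma<\frac{n}{q_1}$ becomes precisely $n\!\left(\frac{1}{p_1}-\frac{1}{p_2}\right)\leq\gamma<\frac{n}{p_1}$, while the admissibility condition on the secondary indices, $1\leq r_1<r_2\leq\infty$ (or $r_1=r_2=\infty$), is carried over unchanged. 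Likewise, the measure condition $\mu(B)\lesssim[m(B)]^{q_2(1/q_1-\gamma/n)}$ turns into $\mu(B)\lesssim[m(B)]^{p_2(1/p_1-\gamma/n)}$, matching the corollary verbatim.

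With the indices identified, the trace inequality of Theorem \ref{AI-Morrey},
$$\left\Vert I_\gamma f\right\Vert_{\mathcal{M}_{p_2,r_2}^{q_2}(\mathbb{R}^n,\mu)}\lesssim\left\Vert f\right\Vert_{\mathcal{M}_{p_1,r_1}^{q_1}(\mathbb{R}^n,m)},$$
becomes, after replacing each $\mathcal{M}_{p_i,r_i}^{p_i}$ by $L^{p_i,r_i}$, exactly the desired estimate, and the biconditional ``if and only if'' is inherited directly. The only point requiring genuine care --- rather than being a mere formality --- is the identification $\mathcal{M}_{p,r}^p(\mathbb{R}^n,\nu)=L^{p,r}(\mathbb{R}^n,\nu)$ with comparable norms. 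I would treat this as the real content of the proof: once it is recorded (it should follow immediately from the definitions of the two scales of spaces, since the Morrey averaging degenerates when the Morrey exponent equals the Lorentz--Lebesgue exponent), both directions of the corollary follow at once from Theorem \ref{AI-Morrey}.
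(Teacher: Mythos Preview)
Your proposal is correct and matches the paper's own treatment: the paper presents Corollary~\ref{Adams-Inequality-Lorentz} as an immediate specialization of Theorem~\ref{AI-Morrey} (``In particular, this yields the following outcome within the scope of Lorentz spaces''), which is exactly your strategy of setting $q_i=p_i$ and invoking the identification $\mathcal{M}_{p,r}^{p}=L^{p,r}$. Your parameter-by-parameter verification is thorough and accurate.
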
 
 
 \subsection{Function spaces}
 
In this subsection, we fix some notations and recall definitions of certain function spaces required for the subsequent discussion. We begin with Lorentz spaces.

\begin{definition}  \label{Lorentz-space}
 A Lorentz space $L^{p,r}(\Omega, \nu)$ defined over a $\sigma-$finite measure space $(\Omega, \Sigma, \nu)$, consists of all $\nu$-measurable functions on $\Omega$ for which the functional $\|f\|_{L^{p,r}(\Omega, \nu)}$ is finite, where 
\begin{equation*}
\left\Vert f\right\Vert _{L^{p,r}(\Omega, \nu)}:=\left\{ 
\begin{array}{ccc}
\left( \int_{0}^{\infty }\left( t^{\frac{1}{p}}f^{\ast }\left( t\right)
\right) ^{r}\frac{dt}{t} \right)^{1/r} & \text{if} & 0<p<\infty ,0<r<\infty
\\ 
\underset{t>0}{\sup} ~t^{\frac{1}{p}}f^{\ast }\left( t\right) & \text{if} & 0<p\leq
\infty ,r=\infty,%
\end{array}%
\right.
\end{equation*}
and $f^{*}(t) :=\inf\{s \geq 0 : \nu(\{x \in \Omega: |f|>s \}) \leq t \},~t \geq 0,$ is the decreasing (or non-increasing) rearrangement of $f$.
\end{definition}
 We note that $L^{p,p} = L^p$. It is important to emphasize that $\|\cdot\|_{L^{p,r}(\Omega, \nu)}$ is not always a norm, but rather a quasi-norm (see \cite[p. 216]{Sharpley1988}). However, one can define a functional $\|\cdot\|_{L^{(p,r)}(\Omega, \nu)}$ on $L^{p,r}(\Omega, \nu)$ as follows:

\begin{equation*}
\left\Vert f\right\Vert_{L^{\left( p,r\right)}(\Omega, \nu)}:=\left\{
\begin{array}{ccc}
\left( \int_{0}^{\infty}\left( t^{\frac{1}{p}}f^{\ast \ast }\left( t\right)
\right)^{r}\frac{dt}{t} \right)^{1/r} & \text{if} & 0<p<\infty ,0<r<\infty 
\\
\underset{t>0}{\sup}~t^{\frac{1}{p}}f^{\ast \ast }\left( t\right) & \text{if} & 0<p\leq
\infty ,r=\infty,
\end{array}
\right.
\end{equation*}
where the function $f^{**}(t):=\frac{1}{t}\int_0^t f^{*}(t) dt$ is referred to as the maximal average function. This functional is fortunately subadditive. Consequently, $L^{(p,r)}(\Omega, \nu) := \left( L^{p,r}(\Omega, \nu),\|\cdot\|_{L^{(p,r)}(\Omega, \nu)} \right)$ is a normed space for $1<p< \infty$, $1 \leq r \leq \infty$, or $p=r=\infty$.
 Since $f^{*} \leq f^{**}$, we have $L^{(p,r)} \hookrightarrow L^{p,r}$.
Moreover, if $1<p \leq \infty$ and $1 \leq r \leq \infty$, then $L^{p,r} \hookrightarrow L^{(p,r)}$ (see \cite[Lemma 4.5, p. 219]{Sharpley1988}).
 The substitution of $L^{p,1}$ for $L^{p_1}$ on the right-hand side of the inequality (\ref{adams-Inequality}) retains its validity in the limiting case $p_1=p_2=p$ (see \cite{MR4319075,MR3833820}).

 Another important generalization of Lebesgue spaces is the classical Herz space, introduced by C.S. Herz in \cite{Herz}, as a suitable environment for the action of Fourier transform on a Lipschitz class. Although the Herz spaces are defined in various equivalent ways, we adopt the formulation presented in \cite{Garcia94, HTSAA} with a slightly changed notation for our convenience.
 
 Let $\displaystyle \left( \Omega_{t} \right)_{t \in \mathbb{Z}}$ be the dyadic decomposition of $\mathbb{R}^n$ i.e., $\Omega_{t}=\{x\in \mathbb{R}^{n}:2^{t-1} \leq |x|<2^{t}\}$ for $t\in \mathbb{Z}$. We denote $\tilde{\chi}_{\Omega _{t}}=\chi _{\Omega _{t}}$ for $t\in \mathbb{Z}_{+}$,
and $\tilde{\chi}_{\Omega _{-1}}=\chi _{B(0,\frac{1}{2})}$, where $B(0,\frac{1}{2})$ represents the ball centered at the origin with a radius $\frac{1}{2}$ in $\mathbb{R}^n$.

\begin{definition} \label{HLD}
Let $\lambda \in \mathbb{R}$, $0<p,q\leq \infty $ and $\nu$ be a positive measure on $\mathbb{R}^n$.
\begin{itemize}
\item[(i)] The homogeneous Herz space $\dot{K}_{\lambda,q}^{p}(\mathbb{R}^{n}, \nu)$ is defined by
\begin{equation*}
\dot{K}_{\lambda,q}^{p}(\mathbb{R}^{n}, \nu):= \{ f \in L_{loc}^{p}(\mathbb{R}^{n}\setminus \{0\}, \nu): \Vert f \Vert_{\dot{K}_{\lambda,q}^{p}(\mathbb{R}^{n}, \nu)}<\infty\},
\end{equation*}
where $$\Vert f\Vert _{\dot{K}_{\lambda,q}^{p}(\mathbb{R}^{n}, \nu)}:=\left( \sum_{t\in \mathbb{Z}} 2^{t \lambda q}\Vert f\chi _{\Omega _{t}}\Vert _{{L^{p}(\mathbb{R}^n, \nu)}}^{q}\right) ^{\frac{1}{q}}.$$
 \item[(ii)] The inhomogeneous Herz space $K_{\lambda ,q}^{p}(\mathbb{R}^{n}, \nu)$ is defined by 
 \begin{equation*}
K_{\lambda,q}^{p}(\mathbb{R}^{n}, \nu):=\{f\in L_{loc}^{p}(\mathbb{R}^{n}, \nu):\Vert f\Vert
_{K_{\lambda,q}^{p}(\mathbb{R}^{n}, \nu)}<\infty \},
\end{equation*}%
where $$\Vert f\Vert _{K_{\lambda,q}^{p}(\mathbb{R}^{n})}:=\left( \sum_{t=-1}^{\infty
}2^{t \lambda q}\Vert f\tilde{\chi}_{\Omega _{t}}\Vert _{{L^{p}(\mathbb{R}^n, \nu)}}^{q}\right) ^{\frac{%
1}{q}}.$$
\end{itemize}

If $p$ and/or $q$ are infinite, the usual modifications are made.
\end{definition}

It is obvious that $\dot{K}_{0,p}^{p}(\mathbb{R}^{n}, \nu)= K_{0,p}^{p}(\mathbb{R}^{n}, \nu)= L^p(\mathbb{R}^n, \nu)$. In recent years, there has been substantial advancement in the development of Herz spaces, primarily driven by their wide range of applications (see, for instance, \cite{Baernstein,SLPE, SFT,HTH1, PDE,HTH2} and references therein). However,  Herz spaces alone are insufficient to describe some fine properties of functions and operators. Consequently, defining Lorentz-Herz spaces $\dot{HL}_{\lambda,q}^{p,r}(\mathbb{R}^{n}, \nu)$ and $HL_{\lambda,q}^{p,r}(\mathbb{R}^{n}, \nu)$ emerges as a natural progression. These spaces are derived simply by amalgamating Lorentz spaces with Lebesgue sequence spaces—essentially replacing the functionals $\| \cdot\|_{L^{p}(\mathbb{R}^n, \nu)}$ with $\| \cdot\|_{L^{p,r}(\mathbb{R}^n, \nu)}$ in Definition \ref{HLD}. The properties of these spaces, even in more general settings, have been investigated in \cite{my}.

The trace principle for Riesz potentials on Herz spaces and their extensions remains absent from the academic literature. This absence is particularly noteworthy, given the pivotal role that inequalities associated with Riesz potentials play as indispensable tools for estimating functions in terms of their gradients, commonly referred to as Sobolev inequalities. These inequalities are considered as corners stones of the Sobolev theory in partial differential equations.

To establish such estimates within the Herz-type setting, the derivation of trace inequalities is of paramount importance. In the ensuing section, we present a rigorous proof of trace inequalities for both Herz and Lorentz-Herz spaces. Additionally, we engage in a comprehensive discussion on the optimality of specific parametric conditions inherent in these trace inequalities. The resulting trace theorems subsequently facilitate the proof of Sobolev inequalities within Herz space settings, providing succinct estimates for functions in relation to their gradients. As a consequential outcome, we ascertain a Sobolev embedding theorem for Herz-type Sobolev spaces.


\section{Trace Theorems} \label{Section 2}
We commence this section by presenting the trace theorem for Herz spaces. To handle convolution operators with kernels having singularities at the origin, a conventional and widely adopted approach involves the appropriate decomposition of the summation into distinct components, systematically accounting for the presence of the singularity. This well-established technique has found pervasive application in several research papers. Hereafter, if the measure associated with a particular norm is not explicitly mentioned, it is to be understood as the Lebesgue measure on $\mathbb{R}^n$.
\begin{theorem}\label{MT}
Assume that $1 < p_1 < p_2 < \infty$, $1 \leq q_1 \leq q_2 < \infty$, $0 < \gamma < \frac{n}{p_1}$, and $\gamma - \frac{n}{p_1} < \lambda < n - \frac{n}{p_1}$. If $\mu(B) \lesssim [m(B)]^{p_2\left(\frac{1}{p_{1}} - \frac{\gamma}{n}\right)}$ for every ball $B \subset \mathbb{R}^n$, then
\begin{equation} \label{Thm1}
\left \Vert I_{\gamma}f \right \Vert_{\dot{K}_{\lambda,q_2}^{p_2}(\mathbb{R}^n,\mu)} \lesssim \left \Vert f \right \Vert_{\dot{K}_{\lambda,q_1}^{p_1}(\mathbb{R}^n,m)}.
\end{equation}
\end{theorem}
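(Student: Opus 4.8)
The plan is to exploit the dyadic structure of the homogeneous Herz norm and reduce the estimate to a discrete convolution inequality on $\mathbb{Z}$. Writing $f_j := f\chi_{\Omega_j}$, so that $f = \sum_{j \in \mathbb{Z}} f_j$, the goal is to control, for each $t \in \mathbb{Z}$, the quantity
$$\|(I_\gamma f)\chi_{\Omega_t}\|_{L^{p_2}(\mu)} \leq \sum_{j \in \mathbb{Z}} \|(I_\gamma f_j)\chi_{\Omega_t}\|_{L^{p_2}(\mu)}.$$
I would split the inner sum into a \emph{near} part $|j-t| \leq 1$ and two \emph{far} parts $j \leq t-2$ and $j \geq t+2$, since the Riesz kernel $|x-y|^{\gamma-n}$ behaves quite differently on and off the diagonal annuli.

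For the near part I would invoke Adams' inequality (\ref{adams-Inequality}) directly: the hypothesis on $\mu$ is exactly Adams' condition, so $\|(I_\gamma f_j)\chi_{\Omega_t}\|_{L^{p_2}(\mu)} \leq \|I_\gamma f_j\|_{L^{p_2}(\mu)} \lesssim \|f_j\|_{L^{p_1}(m)}$, with at most three indices $j$ contributing per $t$. For the far parts I would use the elementary geometric fact that $x \in \Omega_t$ and $y \in \Omega_j$ with $|t-j| \geq 2$ force $|x-y| \gtrsim 2^{\max(t,j)}$, yielding the pointwise bound $|I_\gamma f_j(x)| \lesssim 2^{\max(t,j)(\gamma-n)}\|f_j\|_{L^1(m)}$ for $x \in \Omega_t$. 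Estimating $\|f_j\|_{L^1(m)} \lesssim 2^{jn/p_1'}\|f_j\|_{L^{p_1}(m)}$ by H\"older (with $p_1' = p_1/(p_1-1)$) and bounding $\mu(\Omega_t)^{1/p_2} \leq \mu(B(0,2^t))^{1/p_2} \lesssim 2^{t(n/p_1-\gamma)}$ via the trace condition, I would arrive at
$$\|(I_\gamma f_j)\chi_{\Omega_t}\|_{L^{p_2}(\mu)} \lesssim \begin{cases} 2^{(j-t)n/p_1'}\,\|f_j\|_{L^{p_1}(m)}, & j \leq t-2, \\ 2^{(j-t)(\gamma-n/p_1)}\,\|f_j\|_{L^{p_1}(m)}, & j \geq t+2, \end{cases}$$
where the key cancellations are $2^{t(\gamma-n)}2^{t(n/p_1-\gamma)} = 2^{-tn/p_1'}$ and $2^{j(\gamma-n)}2^{jn/p_1'} = 2^{j(\gamma-n/p_1)}$.

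Setting $a_j := 2^{j\lambda}\|f_j\|_{L^{p_1}(m)}$, so that $\|f\|_{\dot{K}_{\lambda,q_1}^{p_1}(\mathbb{R}^n,m)} = \|(a_j)\|_{\ell^{q_1}}$, and $b_t := 2^{t\lambda}\|(I_\gamma f)\chi_{\Omega_t}\|_{L^{p_2}(\mu)}$, the three estimates combine into a convolution bound
$$b_t \lesssim \sum_{j \in \mathbb{Z}} K(t-j)\, a_j,$$
with $K(s) \sim 1$ for $|s| \leq 1$, $K(s) \sim 2^{s(\lambda - n/p_1')}$ for $s \geq 2$, and $K(s) \sim 2^{s(\lambda-\gamma+n/p_1)}$ for $s \leq -2$. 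Here the upper restriction $\lambda < n - n/p_1 = n/p_1'$ makes the tail over $s \geq 2$ summable, while the lower restriction $\gamma - n/p_1 < \lambda$ makes the tail over $s \leq -2$ summable, so that $K \in \ell^1(\mathbb{Z})$. Young's convolution inequality on $\mathbb{Z}$ then gives $\|(b_t)\|_{\ell^{q_1}} \leq \|K\|_{\ell^1}\,\|(a_j)\|_{\ell^{q_1}}$, and since $q_1 \leq q_2$ the embedding $\ell^{q_1} \hookrightarrow \ell^{q_2}$ upgrades the left side to $\|(b_t)\|_{\ell^{q_2}} = \|I_\gamma f\|_{\dot{K}_{\lambda,q_2}^{p_2}(\mathbb{R}^n,\mu)}$, which is the desired inequality (\ref{Thm1}).

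I expect the main obstacle to lie in the exponent bookkeeping of the far-term estimates and in confirming that the two-sided restriction on $\lambda$ is precisely what places $K$ in $\ell^1(\mathbb{Z})$: the decay for $j \ll t$ rests on the upper bound for $\lambda$, whereas the decay for $j \gg t$ rests on the lower bound. One must also justify the pointwise-bound-then-integrate step and the interchange of summation and integration, which is legitimate by the nonnegativity of the integrand together with monotone convergence.
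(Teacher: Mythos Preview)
Your proposal is correct and follows essentially the same strategy as the paper: decompose $f=\sum_j f_j$ along dyadic annuli, split into near ($|j-t|\le 1$) and far ($j\le t-2$, $j\ge t+2$) contributions, invoke Adams' trace inequality (\ref{adams-Inequality}) on the near part, and use the pointwise kernel bound $|x-y|\gtrsim 2^{\max(t,j)}$ together with H\"older and the growth condition on $\mu$ for the far parts, with the two-sided restriction on $\lambda$ ensuring geometric decay. The only cosmetic difference is that the paper applies the embedding $\ell^{q_1}\hookrightarrow\ell^{q_2}$ at the outset and then handles each far sum via H\"older plus interchange of summation, whereas you recast the resulting discrete estimate as a convolution $b\lesssim K*a$ with $K\in\ell^1(\mathbb{Z})$ and appeal to Young's inequality---these are equivalent bookkeepings of the same computation.
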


\begin{proof}
Let $f \in \dot{K}_{\lambda,q_1}^{p_1}(\mathbb{R}^n,m)$. Since $0< \frac{q_1}{q_2} \leq 1$, we have
\begin{eqnarray*}
\left \Vert I_{\gamma}f \right \Vert^{q_1}_{\dot{K}_{\lambda,q_2}^{p_2}(\mathbb{R}^n,\mu)} & = & \left[\sum_{t \in \mathbb{Z}} 2^{t \lambda q_2} \left(\int_{\Omega_t} |I_{\gamma}f(x)|^{p_2}d \mu(x)\right)^{\frac{q_2}{p_2}} \right]^{\frac{q_1}{q_2}} \\ & \leq & \left[\sum_{t \in \mathbb{Z}} 2^{t \lambda q_1} \left(\int_{\Omega_t} |I_{\gamma}f(x)|^{p_2}d \mu(x)\right)^{\frac{q_1}{p_2}} \right].
\end{eqnarray*}
By setting $f_s=f\chi_{\Omega_s}$ for every $s \in \mathbb{Z}$, we have $\displaystyle f=\sum_{s \in \mathbb{Z}} f_s$. Using Minkowski's inequality, we get
\begin{eqnarray} \label{three-sums}
\left \Vert I_{\gamma}f \right \Vert_{\dot{K}_{\lambda,q_2}^{p_2}(\mathbb{R}^n,\mu)} & \leq & 
\left[\sum_{t \in \mathbb{Z}} 2^{t \lambda q_1} \left(\sum_{s \in \mathbb{Z}}\left(\int_{\Omega_t} |I_{\gamma}f_s(x)|^{p_2}d \mu(x)\right)^{\frac{1}{p_2}} \right)^{q_1} \right]^{\frac{1}{q_1}} \nonumber \\ & \leq & \left[\sum_{t \in \mathbb{Z}} 2^{t \lambda q_1} \left(\sum_{s \leq t-2}\left(\int_{\Omega_t} |I_{\gamma}f_s(x)|^{p_2}d \mu(x)\right)^{\frac{1}{p_2}} \right)^{q_1} \right]^{\frac{1}{q_1}} \nonumber \\ &
+ & \left[\sum_{t \in \mathbb{Z}} 2^{t \lambda q_1} \left(\sum_{t-1 \leq s \leq t+1}\left(\int_{\Omega_t} |I_{\gamma}f_s(x)|^{p_2}d \mu(x)\right)^{\frac{1}{p_2}} \right)^{q_1} \right]^{\frac{1}{q_1}} \nonumber \\
& + & \left[\sum_{t \in \mathbb{Z}} 2^{t \lambda q_1} \left(\sum_{s \geq t+2}\left(\int_{\Omega_t} |I_{\gamma}f_s(x)|^{p_2}d \mu(x)\right)^{\frac{1}{p_2}} \right)^{q_1} \right]^{\frac{1}{q_1}} \nonumber \\
 & := & E_1+E_2+E_3.
\end{eqnarray}

Now we estimate the terms $E_1$, $E_2$ and $E_3$ one by one. 

{\it Estimation of $E_1$}: For $s \leq t-2$ and a.e. $x \in \Omega_t$, we have

\begin{eqnarray*}
|I_{\gamma}f_s(x)| & \lesssim & 2^{-t(n-\gamma)}\Big|\int_{\mathbb{R}^n}f_s(y)dm(y) \Big| \\ & \leq & 2^{-t(n-\gamma)}\|f_s\|_{L^{p_1}}\|\chi_{\Omega_s}\|_{L^{p'_{1}}}.
\end{eqnarray*}
Thus,
\begin{eqnarray*}
E_1 & = & \left[\sum_{t \in \mathbb{Z}} 2^{t \lambda q_1} \left(\sum_{s \leq t-2}\left(\int_{\Omega_t} |I_{\gamma}f_s(x)|^{p_2}d \mu(x)\right)^{\frac{1}{p_2}} \right)^{q_1} \right]^{\frac{1}{q_1}} \\ & \lesssim & \left[\sum_{t \in \mathbb{Z}} 2^{t \lambda q_1} \left(\sum_{s \leq t-2} 2^{-t(n-\gamma)}\|f_s\|_{L^{p_1}}\|\chi_{\Omega_s}\|_{L^{p'_{1}}} \|\chi_{\Omega_t}\|_{L^{p_2}(\mathbb{R}^n, \mu)} \right)^{q_1} \right]^{\frac{1}{q_1}} \\ & \lesssim & \left[\sum_{t \in \mathbb{Z}} \left(\sum_{s \leq t-2} 2^{s \lambda} \|f_s\|_{L^{p_1}} \cdot 2^{\alpha(t-s)} \right)^{q_1} \right]^{\frac{1}{q_1}}.
\end{eqnarray*}
Here $\alpha=\frac{n}{p_1}-n+ \lambda<0$. Using H\"older's inequality for inside sum and changing order of summations, we get
\begin{eqnarray*}
E_1 & \lesssim & \left[\sum_{t \in \mathbb{Z}} \left(\sum_{s \leq t-2} 2^{s \lambda q_1} \|f_s\|^{q_1}_{L^{p_1}} \cdot 2^{\frac{\alpha q_1}{2}(t-s)} \left\{ \sum_{s \leq t-2} 2^{\frac{\alpha q'_1}{2}(t-s)} \right\}^{\frac{q_1}{q'_1}}\right) \right]^{\frac{1}{q_1}} \\ & \lesssim & \left[\sum_{t \in \mathbb{Z}} \sum_{s \leq t-2} 2^{s \lambda q_1} \|f_s\|^{q_1}_{L^{p_1}} \cdot 2^{\frac{\alpha q_1}{2}(t-s)}  \right]^{\frac{1}{q_1}} \\ & =& \left[\sum_{s \in \mathbb{Z}} 2^{s \lambda q_1} \|f_s\|^{q_1}_{L^{p_1}} \sum_{t \geq s+2} 2^{\frac{\alpha q_1}{2}(t-s)}  \right]^{\frac{1}{q_1}} \\ & \lesssim & \left \Vert f \right \Vert_{\dot{K}_{\lambda,q_1}^{p_1}(\mathbb{R}^n,m)}.
\end{eqnarray*}

{\it Estimation of $E_2$}: Applying Minkowski's inequality and (\ref{adams-Inequality}), we have
\begin{eqnarray*}
E_2 & \leq & \left[\sum_{t \in \mathbb{Z}} 2^{t \lambda q_1} \left(\sum_{t-1 \leq s \leq t+1} \|I_{\gamma} f_s \|_{L^{p_2}(\mathbb{R}^n, \mu)} \right)^{q_1} \right]^{\frac{1}{q_1}} \\ & \leq & \left[\sum_{t \in \mathbb{Z}} 2^{t \lambda q_1} \|I_{\gamma} f_{t-1} \|^{q_1}_{L^{p_2}(\mathbb{R}^n, \mu)} \right]^{\frac{1}{q_1}} + \left[\sum_{t \in \mathbb{Z}} 2^{t \lambda q_1} \|I_{\gamma} f_{t} \|^{q_1}_{L^{p_2}(\mathbb{R}^n, \mu)}\right]^{\frac{1}{q_1}} + \left[\sum_{t \in \mathbb{Z}} 2^{t \lambda q_1} \|I_{\gamma} f_{t+1} \|^{q_1}_{L^{p_2}(\mathbb{R}^n, \mu)}  \right]^{\frac{1}{q_1}} \\ & \lesssim & \left \Vert f \right \Vert_{\dot{K}_{\lambda,q_1}^{p_1}(\mathbb{R}^n,m)}.
\end{eqnarray*}

{\it Estimation of $E_3$}: For $s \geq t+2$ and a.e $x \in \Omega_t$, by using a similar technique as in estimation of $E_1$, we get
\begin{eqnarray*}
|I_{\gamma}f_s(x)| & \lesssim &  2^{-s(n-\gamma)}\|f_s\|_{L^{p_1}}\|\chi_{\Omega_s}\|_{L^{p'_{1}}}.
\end{eqnarray*}
Therefore,
\begin{eqnarray*}
E_3 & = & \left[\sum_{t \in \mathbb{Z}} 2^{t \lambda q_1} \left(\sum_{s \geq t+2}\left(\int_{\Omega_t} |I_{\gamma}f_s(x)|^{p_2}d \mu(x)\right)^{\frac{1}{p_2}} \right)^{q_1} \right]^{\frac{1}{q_1}} \\ & \lesssim & \left[\sum_{t \in \mathbb{Z}} 2^{t \lambda q_1} \left(\sum_{s \geq t+2} 2^{-s(n-\gamma)}\|f_s\|_{L^{p_1}}\|\chi_{\Omega_s}\|_{L^{p'_{1}}} \|\chi_{\Omega_t}\|_{L^{p_2}(\mathbb{R}^n, \mu)} \right)^{q_1} \right]^{\frac{1}{q_1}} \\ & \lesssim & \left[\sum_{t \in \mathbb{Z}} \left(\sum_{s \geq t+2} 2^{s \lambda} \|f_s\|_{L^{p_1}} \cdot 2^{\beta(t-s)} \right)^{q_1} \right]^{\frac{1}{q_1}}.
\end{eqnarray*}
Where $\delta=\frac{n}{p_1}- \gamma+\lambda > 0$. Now use H\"older's inequality for inside sum and then interchange the order of summations, we obtain
\begin{eqnarray*}
E_3 & \lesssim & \left[\sum_{t \in \mathbb{Z}} \left(\sum_{s \geq t+2} 2^{s \lambda q_1} \|f_s\|^{q_1}_{L^{p_1}} \cdot 2^{\frac{\delta q_1}{2}(t-s)} \left\{ \sum_{s \geq t+2} 2^{\frac{\delta q'_1}{2}(t-s)} \right\}^{\frac{q_1}{q'_1}}\right) \right]^{\frac{1}{q_1}} \\ 
& \lesssim & \left[\sum_{t \in \mathbb{Z}} \sum_{s \geq t+2} 2^{s \lambda q_1} \|f_s\|^{q_1}_{L^{p_1}} \cdot 2^{\frac{\delta q_1}{2}(t-s)}  \right]^{\frac{1}{q_1}} \\ 
& =& \left[\sum_{s \in \mathbb{Z}} 2^{s \lambda q_1} \|f_s\|^{q_1}_{L^{p_1}} \sum_{t \leq s-2} 2^{\frac{\delta q_1}{2}(t-s)}  \right]^{\frac{1}{q_1}} \\ 
& \lesssim & \left \Vert f \right \Vert_{\dot{K}_{\lambda,q_1}^{p_1}(\mathbb{R}^n,m)}.
\end{eqnarray*}
This completes the proof.
\end{proof}

In the case of $p_i=q_i$ for $i=1,2$ and $\lambda =0$, the converse of the above theorem holds (see (\ref{adams-Inequality}). However, in general, the question of its converse remains an open inquiry. Nevertheless, we establish a partial answer for a particular set of parameters.
\begin{proposition} \label{Necessity}
Let $p_1,p_2,q_1,q_2$ and $\gamma$ be as in Theorem \ref{MT}. Suppose $p_1 \leq q_1 \leq q_2 \leq p_2$ and $\lambda=0$. 
 If $\left \Vert I_{\gamma}f \right \Vert_{\dot{K}_{\lambda,q_2}^{p_2}(\mathbb{R}^n,\mu)} \lesssim \left \Vert f \right \Vert_{\dot{K}_{\lambda,q_1}^{p_1}(\mathbb{R}^n,m)}$, then $\mu(B) \lesssim [m(B)]^{p_2\left(\frac{1}{p_{1}}-\frac{\gamma}{n}\right)}$ for every ball $B \subset \mathbb{R}^n$.
\end{proposition}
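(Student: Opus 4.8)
The plan is to test the assumed trace inequality on characteristic functions of balls, exactly as in the classical Adams scheme, and to exploit the parameter chain $p_1 \le q_1 \le q_2 \le p_2$ together with $\lambda = 0$ to collapse the Herz quasi-norms of these test functions into pure powers of the underlying measures. Fix an arbitrary ball $B = B(x_0,R)$ and take $f = \chi_B$.

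First I would bound the right-hand side from above. Since $\lambda = 0$,
$$\Vert\chi_B\Vert_{\dot K^{p_1}_{0,q_1}(\mathbb R^n,m)}^{q_1} = \sum_{t\in\mathbb Z}\bigl[m(B\cap\Omega_t)\bigr]^{q_1/p_1}.$$
Because $q_1/p_1 \ge 1$, the elementary embedding $\ell^1 \hookrightarrow \ell^{q_1/p_1}$ (superadditivity of $s\mapsto s^{q_1/p_1}$) gives $\sum_t [m(B\cap\Omega_t)]^{q_1/p_1} \le \bigl[\sum_t m(B\cap\Omega_t)\bigr]^{q_1/p_1} = [m(B)]^{q_1/p_1}$, whence $\Vert\chi_B\Vert_{\dot K^{p_1}_{0,q_1}(m)} \lesssim [m(B)]^{1/p_1} \asymp R^{n/p_1}$.

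Next the left-hand side from below. A standard computation shows $I_{\gamma}\chi_B(x) \ge \int_{B(x,R/2)}|x-y|^{\gamma-n}\,dm(y) \gtrsim R^{\gamma}$ for every $x\in\tfrac12 B$, because $B(x,R/2)\subset B$ there. By monotonicity of the Herz quasi-norm and the reverse embedding (subadditivity of $s\mapsto s^{q_2/p_2}$, valid since $q_2/p_2\le 1$),
$$\Vert I_{\gamma}\chi_B\Vert_{\dot K^{p_2}_{0,q_2}(\mathbb R^n,\mu)} \gtrsim R^{\gamma}\Bigl(\sum_{t\in\mathbb Z}\bigl[\mu(\tfrac12 B\cap\Omega_t)\bigr]^{q_2/p_2}\Bigr)^{1/q_2} \ge R^{\gamma}\,[\mu(\tfrac12 B)]^{1/p_2}.$$
Feeding both estimates into the hypothesis $\Vert I_{\gamma}\chi_B\Vert_{\dot K^{p_2}_{0,q_2}(\mu)}\lesssim\Vert\chi_B\Vert_{\dot K^{p_1}_{0,q_1}(m)}$ yields $R^{\gamma}[\mu(\tfrac12 B)]^{1/p_2}\lesssim R^{n/p_1}$, that is $\mu(\tfrac12 B)\lesssim R^{p_2(n/p_1-\gamma)}\asymp [m(\tfrac12 B)]^{p_2(1/p_1-\gamma/n)}$. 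Since $B$ was arbitrary, $\tfrac12 B$ ranges over all balls and the claimed measure condition follows.

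The one genuinely load-bearing point is the simultaneous collapse of the two Herz quasi-norms: $p_1\le q_1$ is precisely what lets the $\ell^{q_1/p_1}$-sum on the source side be dominated by $[m(B)]^{1/p_1}$, while $q_2\le p_2$ is precisely what lets the $\ell^{q_2/p_2}$-sum on the target side be bounded below by $[\mu(\tfrac12 B)]^{1/p_2}$; the vanishing of $\lambda$ removes the dyadic weights $2^{t\lambda q}$ that would otherwise obstruct both the super- and sub-additivity steps and thereby prevent the reduction to a statement about a single ball. The remaining ingredients — the pointwise lower bound for $I_{\gamma}\chi_B$ on $\tfrac12 B$ and monotonicity of the quasi-norm — are routine, so I anticipate no serious obstacle beyond correctly matching the two opposite $\ell^{s}$ inequalities to the two sides and checking that the positioning of $B$ relative to the origin plays no role (which it does not, since both $\ell^{s}$ steps use only disjointness of the annuli and the total mass).
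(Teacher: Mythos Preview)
Your proof is correct and follows essentially the same approach as the paper: test on $f=\chi_B$, use $q_1/p_1\ge 1$ to collapse the source Herz norm to $[m(B)]^{1/p_1}$, use $q_2/p_2\le 1$ to collapse the target Herz norm, and combine. The only cosmetic difference is that the paper obtains the pointwise lower bound $I_\gamma\chi_B(x)\gtrsim [m(B)]^{\gamma/n}$ directly for all $x\in B$ via $|x-y|\le 2R$, thereby recovering $\mu(B)$ on the nose rather than $\mu(\tfrac12 B)$; your detour through $\tfrac12 B$ and the final rescaling is equally valid.
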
 
\begin{proof}
For a given ball $B \subset \mathbb{R}^n$, set $f(x)=  \chi_B(x)$. Then
\begin{eqnarray} \label{remark-eqn1}
\left \Vert f \right \Vert_{\dot{K}_{0,q_1}^{p_1}(\mathbb{R}^n,m)} & = & \left[ \sum_{t \in \mathbb{Z}} \left(\int_{\Omega_t}  \Big| \chi_B(x)\Big|^{p_1} dm(x) \right)^{\frac{q_1}{p_1}} \right]^{\frac{1}{q_1}} \nonumber 
\\ & \leq & \left[ \sum_{t \in \mathbb{Z}}{[m(B \cap \Omega_t)]} \right]^{\frac{1}{p_1}} \nonumber \\ & = & [m(B)]^{\frac{1}{p_1}}.
\end{eqnarray}
Moreover,
\begin{eqnarray}
\left \Vert I_{\gamma}f \right \Vert_{\dot{K}_{0,q_2}^{p_2}(\mathbb{R}^n,\mu)} & = & \left[ \sum_{t \in \mathbb{Z}} \left(\int_{\Omega_t}  \Big| \int_{\mathbb{R}^n}\frac{\chi_B(y)}{|x-y|^{n-\gamma}} dm(y)\Big|^{p_2} d \mu(x) \right)^{\frac{q_2}{p_2}} \right]^{\frac{1}{q_2}} \nonumber \\ & \gtrsim &  \left[ \sum_{t \in \mathbb{Z}} \left(\int_{\Omega_t  \cap B}  \Big| \int_{B}\frac{1}{|x-y|^{n-\gamma}} dm(y)\Big|^{p_2} d \mu(x) \right)^{\frac{q_2}{p_2}} \right]^{\frac{1}{q_2}} \nonumber.
\end{eqnarray}
Since $x,y \in B$, we have $|x-y| \leq 2r$, where $r$ is the radius of the ball. Thus,
$$\frac{1}{|x-y|^{n-\gamma}} \gtrsim \frac{1}{(r^n)^{1-\frac{\gamma}{n}}} \gtrsim [m(B)]^{\frac{\gamma}{n}-1}.$$
Hence,
\begin{eqnarray} \label{remark-eqn2}
\left \Vert I_{\gamma}f \right \Vert_{\dot{K}_{0,q_2}^{p_2}(\mathbb{R}^n,\mu)} & \gtrsim & \left[ \sum_{t \in \mathbb{Z}} [m(B)]^{\frac{q_2\gamma}{n}} [\mu(\Omega_t \cap B)]^{\frac{q_2}{p_2}}\right]^{\frac{1}{q_2}} \nonumber \\ & \gtrsim & [m(B)]^{\frac{\gamma}{n}}\left[ \sum_{t \in \mathbb{Z}} [\mu(\Omega_t \cap B)]\right]^{\frac{1}{p_2}} \nonumber \\ & = & [m(B)]^{\frac{\gamma}{n}} [\mu(B)]^{\frac{1}{p_2}}.
\end{eqnarray}
From (\ref{remark-eqn1}) and (\ref{remark-eqn2}), we get
$\mu(B) \lesssim [m(B)]^{p_2\left(\frac{1}{p_{1}}-\frac{\gamma}{n}\right)}$.
\end{proof}

Next we present the trace inequality for homogeneous Lorentz-Herz spaces. Since the proof follows a similar structure to that of Theorem \ref{MT}, we only provide the necessary steps and point out the differences in the arguments.  

\begin{theorem} \label{MT-LH}
Let $p_1,p_2,q_1,q_2, \mu$ be as in Theorem \ref{MT} and $1 \leq r_1 < r_2 \leq \infty$ (or $r_1 = r_2 = \infty$). Suppose $n\left(\frac{1}{p_1}-\frac{1}{p_2}\right) \leq \gamma< \frac{n}{p_1}$ and $\gamma-\frac{n}{p_1}<\lambda<n-\frac{n}{p_1}$. Then 
$$\left \Vert I_{\gamma}f \right \Vert_{\dot{HL}_{\lambda,q_2}^{p_2,r_2}(\mathbb{R}^{n}, \mu)} \lesssim \left \Vert f \right \Vert_{\dot{HL}_{\lambda,q_1}^{p_1,r_1}(\mathbb{R}^{n}, m)}.$$

%
\end{theorem}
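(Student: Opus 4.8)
The plan is to follow the scheme of the proof of Theorem \ref{MT} essentially line by line, replacing every inner Lebesgue norm $\|\cdot\|_{L^{p}(\mathbb{R}^n,\nu)}$ by the corresponding Lorentz norm $\|\cdot\|_{L^{p,r}(\mathbb{R}^n,\nu)}$, and substituting the Lorentz-space Adams inequality for the Lebesgue one at the decisive step. Before starting, I would dispose of one technical issue: $\|\cdot\|_{L^{p,r}}$ is only a quasi-norm, so Minkowski's inequality is not directly available. Since $1<p_1,p_2<\infty$ and $1\le r_1,r_2\le\infty$, I would pass to the equivalent genuine norm $\|\cdot\|_{L^{(p,r)}}$ via the embeddings $L^{p,r}\hookrightarrow L^{(p,r)}\hookrightarrow L^{p,r}$ recalled in the excerpt; this costs only a multiplicative constant and legitimizes every use of the triangle/Minkowski inequality below. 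With that in hand, the opening reduction is unchanged: because $0<q_1/q_2\le 1$ the map $x\mapsto x^{q_1/q_2}$ is subadditive, so
$$\left\Vert I_\gamma f\right\Vert^{q_1}_{\dot{HL}^{p_2,r_2}_{\lambda,q_2}(\mathbb{R}^n,\mu)}\le\sum_{t\in\mathbb{Z}}2^{t\lambda q_1}\left\Vert (I_\gamma f)\chi_{\Omega_t}\right\Vert^{q_1}_{L^{p_2,r_2}(\mathbb{R}^n,\mu)},$$
and after writing $f=\sum_s f_s$ with $f_s=f\chi_{\Omega_s}$ and applying Minkowski I split the $s$-sum into the three ranges $s\le t-2$, $t-1\le s\le t+1$, $s\ge t+2$, obtaining three pieces $E_1,E_2,E_3$.

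For the off-diagonal pieces $E_1$ and $E_3$ I would reuse the pointwise kernel estimates verbatim: for $s\le t-2$ and a.e. $x\in\Omega_t$ one has $|I_\gamma f_s(x)|\lesssim 2^{-t(n-\gamma)}\big|\int_{\mathbb{R}^n}f_s\,dm\big|$, and for $s\ge t+2$ the same bound with $2^{-s(n-\gamma)}$. The only genuine change is how $\int_{\Omega_s}|f_s|$ is controlled: instead of ordinary Hölder I would invoke Hölder's inequality for Lorentz spaces, $\|f_s\|_{L^{1}}\lesssim\|f_s\|_{L^{p_1,r_1}}\|\chi_{\Omega_s}\|_{L^{p_1',r_1'}}$, together with the elementary identity $\|\chi_A\|_{L^{p,r}(\nu)}\simeq \nu(A)^{1/p}$ (immediate since $(\chi_A)^{*}=\chi_{[0,\nu(A)]}$). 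This gives $\|\chi_{\Omega_s}\|_{L^{p_1',r_1'}}\simeq 2^{sn/p_1'}$ and, on the target side, $\|\chi_{\Omega_t}\|_{L^{p_2,r_2}(\mu)}\simeq\mu(\Omega_t)^{1/p_2}\lesssim 2^{t(n/p_1-\gamma)}$ by the measure hypothesis. These are precisely the powers of $m(\Omega_s)$ and $\mu(\Omega_t)$ that occur in Theorem \ref{MT}, so the subsequent Hölder-in-$s$ step, interchange of summation, and geometric summation proceed identically, producing the same exponents $\alpha=\tfrac{n}{p_1}-n+\lambda$ and $\delta=\tfrac{n}{p_1}-\gamma+\lambda$; the hypothesis $\gamma-\tfrac{n}{p_1}<\lambda<n-\tfrac{n}{p_1}$ guarantees $\alpha<0$ and $\delta>0$, so both geometric series converge and $E_1,E_3\lesssim\|f\|_{\dot{HL}^{p_1,r_1}_{\lambda,q_1}(\mathbb{R}^n,m)}$.

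The diagonal piece $E_2$ is where the proof genuinely differs and is the main point to get right. After Minkowski reduces $E_2$ to the three shifted sums over $f_{t-1},f_t,f_{t+1}$, I would estimate each local Riesz potential by the Lorentz-space Adams inequality, namely Corollary \ref{Adams-Inequality-Lorentz}, which yields $\|I_\gamma f_s\|_{L^{p_2,r_2}(\mathbb{R}^n,\mu)}\lesssim\|f_s\|_{L^{p_1,r_1}(\mathbb{R}^n,m)}$ for $s\in\{t-1,t,t+1\}$. This single substitution is exactly why Theorem \ref{MT-LH} carries the extra hypotheses absent from Theorem \ref{MT}: Corollary \ref{Adams-Inequality-Lorentz} requires the lower bound $n(\tfrac{1}{p_1}-\tfrac{1}{p_2})\le\gamma$ on the smoothing index and the strict ordering $1\le r_1<r_2\le\infty$ (or $r_1=r_2=\infty$) of the fine parameters, together with the same condition on $\mu$. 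Summing over $t$ then gives $E_2\lesssim\|f\|_{\dot{HL}^{p_1,r_1}_{\lambda,q_1}(\mathbb{R}^n,m)}$, and combining the three estimates completes the proof. I expect the only real obstacle to be bookkeeping: checking that the Lorentz Hölder inequality applies with the conjugate fine exponent $r_1'$ (so that $\tfrac1{r_1}+\tfrac1{r_1'}\ge1$), and confirming that the application of Corollary \ref{Adams-Inequality-Lorentz} on the finitely many diagonal blocks is consistent with all stated parameter restrictions; the analytic content beyond Theorem \ref{MT} is entirely absorbed into that corollary.
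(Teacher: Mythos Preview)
Your proposal is correct and follows essentially the same route as the paper's own proof: the same $q_1/q_2\le 1$ reduction, the same passage through the equivalent norm $\|\cdot\|_{L^{(p,r)}}$ to justify the triangle inequality, the same three-piece decomposition, Lorentz H\"older plus $\|\chi_{\Omega_s}\|_{L^{p_1',r_1'}}\|\chi_{\Omega_t}\|_{L^{p_2,r_2}(\mu)}\lesssim 2^{t(n-\gamma)+\frac{n}{p_1'}(s-t)}$ for the off-diagonal terms, and Corollary~\ref{Adams-Inequality-Lorentz} for the diagonal block. Your explanation of why the extra hypotheses $n(\tfrac{1}{p_1}-\tfrac{1}{p_2})\le\gamma$ and $r_1<r_2$ appear is exactly the point the paper is making.
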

\begin{proof}
Let $f \in \dot{HL}_{\lambda,q_1}^{p_1,r_1}(\mathbb{R}^{n}, m)$. Then, it is easy to see that
\begin{eqnarray*}
\left \Vert I_{\gamma}f \right \Vert_{\dot{HL}_{\lambda,q_2}^{p_2,r_2}(\mathbb{R}^{n}, \mu)} & \leq & \left[\sum_{t \in \mathbb{Z}} 2^{t \lambda q_1} \left \Vert I_{\gamma}f \cdot \chi_{\Omega_t} \right \Vert_{L^{p_2,r_2}(\mathbb{R}^n, \mu)}^{q_1} \right]^{\frac{1}{q_1}}.
\end{eqnarray*}
As before, set $f_s=f\chi_{\Omega_s},~s \in \mathbb{Z}$ and use triangle inequality of the Lorentz norm, we obtain
\begin{eqnarray*}
\left \Vert I_{\gamma}f \right \Vert_{\dot{HL}_{\lambda,q_2}^{p_2,r_2}(\mathbb{R}^{n}, \mu)} & \leq & \left[\sum_{t \in \mathbb{Z}} 2^{t \lambda q_1} \left \Vert I_{\gamma}\left( \sum_{s \in \mathbb{Z}}f_s \right) \cdot \chi_{\Omega_t} \right \Vert_{L^{p_2,r_2}(\mathbb{R}^n, \mu)}^{q_1} \right]^{\frac{1}{q_1}} \\ & \leq & \left[\sum_{t \in \mathbb{Z}} 2^{t \lambda q_1} \left \Vert \sum_{s \in \mathbb{Z}}I_{\gamma}f_s \cdot \chi_{\Omega_t} \right \Vert_{L^{(p_2,r_2)}(\mathbb{R}^n, \mu)}^{q_1} \right]^{\frac{1}{q_1}} \\ & \lesssim & \left[\sum_{t \in \mathbb{Z}} 2^{t \lambda q_1} \left(\sum_{s \in \mathbb{Z}} \left \Vert I_{\gamma}f_s \cdot \chi_{\Omega_t} \right \Vert_{L^{p_2,r_2}(\mathbb{R}^n, \mu)} \right)^{q_1} \right]^{\frac{1}{q_1}}.
\end{eqnarray*}
The interior sum can be broken into three parts, then by the application of Minkowski's inequality, we may write
\begin{eqnarray*}
\left \Vert I_{\gamma}f \right \Vert_{\dot{HL}_{\lambda,q_2}^{L^{p_2,r_2}(\mathbb{R}^n, \mu)}(\mathbb{R}^{n}, \mu)} & \lesssim & \left[\sum_{t \in \mathbb{Z}} 2^{t \lambda q_1} \left(\sum_{s \leq t-2} \left \Vert I_{\gamma}f_s \cdot \chi_{\Omega_t} \right \Vert_{L^{p_2,r_2}(\mathbb{R}^n, \mu)} \right)^{q_1} \right]^{\frac{1}{q_1}} \\ & + & \left[\sum_{t \in \mathbb{Z}} 2^{t \lambda q_1} \left(\sum_{t-1 \leq s \leq t+1} \left \Vert I_{\gamma}f_s \cdot \chi_{\Omega_t} \right \Vert_{L^{p_2,r_2}(\mathbb{R}^n, \mu)} \right)^{q_1} \right]^{\frac{1}{q_1}} \\ & + & \left[\sum_{t \in \mathbb{Z}} 2^{t \lambda q_1} \left(\sum_{s \geq t+2} \left \Vert I_{\gamma}f_s \cdot \chi_{\Omega_t} \right \Vert_{L^{p_2,r_2}(\mathbb{R}^n, \mu)} \right)^{q_1} \right]^{\frac{1}{q_1}} \\ & := & E_1+E_2+E_3.
\end{eqnarray*}
Now, we proceed as in Theorem \ref{MT} except that we use the H\"older inequality for Lorentz spaces and rely on the fact that $\|\chi_{\Omega_s}\|_{L^{p^{\prime}_1, r^{\prime}_1}} \left \Vert \chi_{\Omega_t} \right \Vert_{L^{p_2,r_2}(\mathbb{R}^n, \mu)} \lesssim 2^{[{t(n-\gamma)}+{\frac{n}{p^{\prime}_1}(s-t)}]}$ (which follows along similar lines to \cite[Lemma 3.1.2.1]{my}). Furthermore, the estimation of $E_2$ is based on Corollary \ref{Adams-Inequality-Lorentz}.

\end{proof}
We wrap up this section with the following theorem addressing the limiting case $p_1=p_2=p$. By employing a simple modification of the proof of either Theorem \ref{MT} or Theorem \ref{MT-LH}, in combination with \cite[Theorem 1.2]{MR3833820} (see also \cite[Theorem 3.1]{MR4319075}), we get

\begin{theorem}\label{Open}
Let $1 <p< \infty$, $1 \leq q_1 \leq q_2 < \infty$, $0 < \gamma < \frac{n}{p}$, and $\gamma - \frac{n}{p} < \lambda < n - \frac{n}{p}$. Suppose that for every ball $B \subset \mathbb{R}^n$, we have $\mu(B) \lesssim [m(B)]^{\left(1 - \frac{\gamma p}{n}\right)}$. Then, 
$$\left \Vert I_{\gamma}f \right \Vert_{\dot{K}_{\lambda,q_2}^{p}(\mathbb{R}^n,\mu)} \lesssim \left \Vert f \right \Vert_{\dot{HL}_{\lambda,q_1}^{p,1}(\mathbb{R}^n,m)}.$$
\end{theorem}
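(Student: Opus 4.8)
The plan is to reproduce the three-block decomposition from the proof of Theorem~\ref{MT}, the only genuinely new ingredient being the replacement of the classical Adams inequality by its $L^{p,1}$-endpoint on the diagonal block. First I would use $q_1\le q_2$ together with the elementary inequality $\big(\sum_t b_t\big)^{q_1/q_2}\le\sum_t b_t^{q_1/q_2}$ to pass from the $\dot K^{p}_{\lambda,q_2}(\mu)$-norm to a $q_1$-summation,
$$\left\Vert I_\gamma f\right\Vert_{\dot K^{p}_{\lambda,q_2}(\mathbb{R}^n,\mu)}\le\left(\sum_{t\in\mathbb{Z}}2^{t\lambda q_1}\left\Vert I_\gamma f\cdot\chi_{\Omega_t}\right\Vert_{L^{p}(\mathbb{R}^n,\mu)}^{q_1}\right)^{1/q_1}.$$
Writing $f=\sum_{s\in\mathbb{Z}}f_s$ with $f_s=f\chi_{\Omega_s}$ and applying Minkowski's inequality, I would then split the inner sum over $s$ into the ranges $s\le t-2$, $t-1\le s\le t+1$, and $s\ge t+2$, producing three terms $E_1+E_2+E_3$ exactly as in (\ref{three-sums}).

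For the off-diagonal blocks $E_1$ and $E_3$ the computation of Theorem~\ref{MT} transfers once $\|f_s\|_{L^{p_1}}$ is read as $\|f_s\|_{L^{p,1}}$. The pointwise bounds $|I_\gamma f_s(x)|\lesssim 2^{-t(n-\gamma)}\int_{\Omega_s}|f_s|\,dm$ (for $s\le t-2$, $x\in\Omega_t$) and $|I_\gamma f_s(x)|\lesssim 2^{-s(n-\gamma)}\int_{\Omega_s}|f_s|\,dm$ (for $s\ge t+2$) still hold; H\"older's inequality for Lorentz spaces gives $\int_{\Omega_s}|f_s|\,dm\le\|f_s\|_{L^{p,1}}\|\chi_{\Omega_s}\|_{L^{p',\infty}}\lesssim 2^{sn/p'}\|f_s\|_{L^{p,1}}$, while the measure hypothesis yields $\|\chi_{\Omega_t}\|_{L^{p}(\mathbb{R}^n,\mu)}=\mu(\Omega_t)^{1/p}\lesssim 2^{t(n/p-\gamma)}$. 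Collecting exponents produces $\|I_\gamma f_s\cdot\chi_{\Omega_t}\|_{L^p(\mu)}\lesssim 2^{(s-t)n/p'}\|f_s\|_{L^{p,1}}$ in the first range and $\lesssim 2^{(t-s)(n/p-\gamma)}\|f_s\|_{L^{p,1}}$ in the third. After inserting the weight $2^{t\lambda}$ and factoring out $2^{s\lambda}\|f_s\|_{L^{p,1}}$, the relevant geometric ratios are $2^{\alpha(t-s)}$ with $\alpha=\lambda-n/p'<0$ for $E_1$ (this sign is precisely the constraint $\lambda<n-n/p$) and $2^{\delta(t-s)}$ with $\delta=\lambda+n/p-\gamma>0$ for $E_3$ (precisely $\lambda>\gamma-n/p$). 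A H\"older splitting of each inner sum followed by summing the resulting convergent geometric series then bounds both $E_1$ and $E_3$ by $\|f\|_{\dot{HL}^{p,1}_{\lambda,q_1}(\mathbb{R}^n,m)}$, exactly as before.

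The heart of the matter, and the step I expect to be the main obstacle, is the diagonal block $E_2$. Here $s$ and $t$ differ by at most one, so no kernel decay is available and the crude pointwise bound driving $E_1$ and $E_3$ is useless; moreover, since $p_1=p_2=p$, the classical trace inequality (\ref{adams-Inequality}) simply fails, so one cannot estimate $\|I_\gamma f_s\|_{L^p(\mu)}$ by $\|f_s\|_{L^p(m)}$. This is exactly the endpoint rescued by the $L^{p,1}$-refinement: invoking \cite[Theorem~1.2]{MR3833820} (equivalently \cite[Theorem~3.1]{MR4319075}), under the growth condition $\mu(B)\lesssim[m(B)]^{1-\gamma p/n}$ one has $\|I_\gamma g\|_{L^{p}(\mathbb{R}^n,\mu)}\lesssim\|g\|_{L^{p,1}(\mathbb{R}^n,m)}$, whence $\|I_\gamma f_s\cdot\chi_{\Omega_t}\|_{L^p(\mu)}\le\|I_\gamma f_s\|_{L^p(\mu)}\lesssim\|f_s\|_{L^{p,1}}$ for each $s\in\{t-1,t,t+1\}$. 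Splitting $E_2$ into the three corresponding shifted sums and using $2^{t\lambda}\sim 2^{(t+j)\lambda}$ for $j\in\{-1,0,1\}$, each reassembles into $\|f\|_{\dot{HL}^{p,1}_{\lambda,q_1}(\mathbb{R}^n,m)}$. Adding the three estimates gives $E_1+E_2+E_3\lesssim\|f\|_{\dot{HL}^{p,1}_{\lambda,q_1}(\mathbb{R}^n,m)}$, which is the asserted inequality.
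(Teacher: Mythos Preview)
Your proposal is correct and follows essentially the same route the paper indicates: the three-block decomposition of Theorem~\ref{MT}, with the Lorentz--H\"older pairing $L^{p,1}\times L^{p',\infty}$ replacing the classical H\"older step in $E_1$ and $E_3$, and the endpoint trace inequality of \cite{MR3833820,MR4319075} replacing Adams' inequality in $E_2$. This is exactly the ``simple modification'' the paper alludes to.
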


The question whether Theorem \ref{Open} holds true when replacing $\dot{HL}_{\lambda,q_1}^{p,1}$ with a space which is not as narrow as this, namely $\dot{HL}_{\lambda,q_1}^{p,r}$ for some $1<r<p$, remains open.

\section{Optimality Conditions}
In this section, we present some examples to illustrate the optimality of certain parametric conditions assumed in Theorem \ref{MT} or Theorem \ref{MT-LH}. To that end, we focus on the case where $n=1$ and $\beta :=p_2\left(\frac{1}{p_{1}}-\gamma\right) \leq 1$. Suppose $\mu$ is the positive Borel measure generated by  $g(x)=x^{\beta-1} \chi_{(0, \infty)}(x)$ on the the Borel sigma algebra $\mathcal{B}$ of $\mathbb{R}$ i.e., $\mu(S)=\int_{S} g~ dm$ for $S \in \mathcal{B}$. We will proceed by working out a few examples using this choice of $\mu$.

\begin{example}
Consider the function $f=\chi_{\Omega_1}$ in $\dot{K}_{\lambda,q_1}^{p_1}(\mathbb{R},m)$. For  $x \in \Omega_t$ and $y \in \Omega_1$, it is evident that $$\frac{1}{|x-y|^{1-\gamma}} \gtrsim \frac{1}{(2+2^t)^{1-\gamma}}.$$ Consequently,
\begin{eqnarray*}
\left \Vert I_{\gamma}f \right \Vert_{\dot{K}_{\lambda,q_2}^{p_2}(\mathbb{R},\mu)} & = & \left[ \sum_{t \in \mathbb{Z}} 2^{t \lambda q_2} \left(\int_{\Omega_t}  \Big| \int_{\mathbb{R}}\frac{\chi_{\Omega_1}(y)}{|x-y|^{1-\gamma}} dm(y)\Big|^{p_2} d \mu(x) \right)^{\frac{q_2}{p_2}} \right]^{\frac{1}{q_2}} \\ & \gtrsim & \left[ \sum_{t \in \mathbb{Z}} \frac{2^{t\lambda q_2} [\mu(\Omega_t)]^{\frac{q_2}{p_2}}}{(2+2^t)^{q_2(1-\gamma)}} \right]^{\frac{1}{q_2}} \\ & \gtrsim & \left[ \sum_{t \leq 0} 2^{t q_2\lambda} [\mu(\Omega_t)]^{\frac{q_2}{p_2}}+\sum_{t \geq 1} 2^{t q_2(\lambda-1+\gamma)} [\mu(\Omega_t)]^{\frac{q_2}{p_2}} \right]^{\frac{1}{q_2}}.
\end{eqnarray*}
Therefore, 
$$\left \Vert I_{\gamma}f \right \Vert_{\dot{K}_{\lambda,q_2}^{p_2}(\mathbb{R},\mu)} \gtrsim \left[\sum_{t \leq 0} 2^{t q_2 \left(\lambda+\frac{1}{p_1}-\gamma \right)}+ \sum_{t \geq 1} 2^{t q_2\left(\lambda-1+\frac{1}{p_{1}} \right)} \right]^{\frac{1}{q_2}}.$$
Thus, for the estimate $\left \Vert I_{\gamma}f \right \Vert_{\dot{K}_{\lambda,q_2}^{p_2}(\mathbb{R},\mu)} \lesssim \left \Vert f \right \Vert_{\dot{K}_{\lambda,q_1}^{p_1}(\mathbb{R},m)}$, it is necessary that $\gamma- \frac{1}{p_1} < \lambda < 1- \frac{1}{p_1}$.
\end{example}
The subsequent example demonstrates the necessity of the condition $q_1 \leq q_2$.

\begin{example} Consider the function $f_k(x)=|x|^{-\left(\lambda+\frac{1}{p_1} \right)} \chi_{\{1<|x|<2^k \}}(x),~k \in \mathbb{N}$. It is not difficult to see that $\|f_k\|_{\dot{K}_{\lambda,q_1}^{p_1}(\mathbb{R},m)} \lesssim k^{\frac{1}{q_1}}$. Moreover,
\begin{eqnarray*}
\|I_{\gamma}f_k\|_{\dot{K}_{\lambda,q_2}^{p_2}(\mathbb{R},\mu)} & = & \left[ \sum_{t \in \mathbb{Z}} 2^{t \lambda q_2} \left(\int_{\Omega_t}  \Big| \int_{\mathbb{R}}\frac{f_k(y)}{|x-y|^{1-\gamma}} dm(y)\Big|^{p_2} d \mu(x) \right)^{\frac{q_2}{p_2}} \right]^{\frac{1}{q_2}} \\ & \gtrsim & \left[ \sum_{t \in \mathbb{Z}} 2^{t \lambda q_2} \left(\int_{\Omega_t}  \Big| \int_{\Omega_t}\frac{f_k(y)}{|x-y|^{1-\gamma}} dm(y)\Big|^{p_2} d \mu(x) \right)^{\frac{q_2}{p_2}} \right]^{\frac{1}{q_2}}.
\end{eqnarray*}
Notice that $\frac{1}{|x-y|^{1-\gamma}} \geq 2^{-(t+1)(1-\gamma)}$ and $f_k(y) \geq 2^{-t\left(\lambda+\frac{1}{p_1} \right)}$ for $x,y \in \Omega_t,~1 \leq t \leq k$. Consequently,
\begin{eqnarray*}
\|I_{\gamma}f_k\|_{\dot{K}_{\lambda,q_2}^{p_2}(\mathbb{R},\mu)} & \gtrsim & \left[ \sum_{1 \leq t \leq k} 2^{q_2\left(t\lambda-(t+1)(1-\gamma)-t\left(\lambda +\frac{1}{p_1}\right)\right)} \left( [m(\Omega_t)]^{p_2}\mu(\Omega_t)\right)^{\frac{q_2}{p_2}} \right]^{\frac{1}{q_2}} \gtrsim \left[ \sum_{1 \leq t \leq k} 1 \right]^{\frac{1}{q_2}} = k^{\frac{1}{q_2}}.
\end{eqnarray*}
Using $\left \Vert I_{\gamma}f \right \Vert_{\dot{K}_{\lambda,q_2}^{p_2}(\mathbb{R},\mu)} \lesssim \left \Vert f \right \Vert_{\dot{K}_{\lambda,q_1}^{p_1}(\mathbb{R},m)}$, we deduce that $k^{\frac{1}{q_2}} \lesssim k^{\frac{1}{q_1}}$. As $k \in \mathbb{N}$ is arbitrary, we must have $q_1 \leq q_2$.
\end{example}
The remaining conditions, namely $p_1 < p_2$ and $\gamma < \frac{n}{p_1}$, are known to be optimal in Lebesgue spaces, and consequently, they stand as optimal conditions for Theorem \ref{MT} (or Theorem \ref{MT-LH}).

\section{Sobolev Inequalities}
The Riesz Potential operator on $\mathbb{R}^n$ classically arises from the $\frac{\gamma}{2}$-th order fractional Laplace equation $(-\Delta)^{\frac{\gamma}{2}}(u)= f$. For $0<\gamma<n$, the function $\mathcal{G}(\gamma)(2\pi |x|)^{-\gamma}$ is the Fourier transform of the function $|x|^{\gamma - n}$ \cite[p. 66]{MR1428685}. Here, the constant $\mathcal{G}(\gamma)$, known as the normalized constant, is given by $$\mathcal{G}(\gamma)=\frac{\pi^{\frac{n}{2}} 2^{\gamma} \Gamma \left(\frac{\gamma}{2}\right)}{\Gamma\left( \frac{n-\alpha}{2} \right)},$$
where $\Gamma$ represents the Euler gamma function. Based on this, it can be readily inferred that the equation $u=\frac{I_{\gamma}(f)}{\mathcal{G}(\gamma)}$ solves the aforementioned equation. Thus, the results in Section \ref{Section 2} indicate that under the conditions of Theorem \ref{MT} (or Theorem \ref{MT-LH}), if $f$ belongs to $\dot{K}_{\lambda,q_1}^{p_1}(\mathbb{R}^n,m)$ $[ \text{or}~ \dot{HL}_{\lambda,q}^{p,r}(\mathbb{R}^{n}, m)]$, then the solution of fractional order equation $(-\Delta)^{\frac{\gamma}{2}}(u)= f$ belongs to $\dot{K}_{\lambda,q_1}^{p_1}(\mathbb{R}^n,\mu)~ [\text{resp.}~ \dot{HL}_{\lambda,q}^{p,r}(\mathbb{R}^{n}, \mu)]$.
 
Another important observation is that if $\mu$ is the Lebesgue measure restricted to Borel sets in $\mathbb{R}^n$ and $\frac{1}{p_2}-\frac{1}{p_1} = \frac{\gamma}{n}$, we can immediately deduce the following Hardy-Littlewood-Sobolev theorem of fractional integration in the context of Lorentz-Herz spaces.
\begin{corollary} \label{HLS-HL}
Let $1<p_1<p_2<\infty$, $1 \leq q_1 \leq q_2<\infty$ and $1 \leq r_1 < r_2 \leq \infty$ $($or $r_1,r_2=\infty$ $)$. Then $$\left \Vert I_{\gamma}f \right \Vert_{\dot{HL}_{\lambda,q_2}^{p_2,r_2}(\mathbb{R}^{n}, m)} \lesssim \left \Vert f \right \Vert_{\dot{HL}_{\lambda,q_1}^{p_1,r_1}(\mathbb{R}^{n}, m)},$$
provided that $\frac{1}{p_1}-\frac{1}{p_2}=\frac{\gamma}{n}$ and $\gamma-\frac{n}{p_1}<\lambda<n-\frac{n}{p_1}$.
\end{corollary}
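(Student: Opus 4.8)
The plan is to obtain this as a direct specialization of Theorem \ref{MT-LH} to the Lebesgue measure, $\mu = m$; the entire argument amounts to checking that the hypotheses of the corollary imply those of that theorem.

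First I would translate the balance condition $\frac{1}{p_1}-\frac{1}{p_2}=\frac{\gamma}{n}$. Multiplying by $n$ gives $n\left(\frac{1}{p_1}-\frac{1}{p_2}\right) = \gamma$, so the constraint $n\left(\frac{1}{p_1}-\frac{1}{p_2}\right) \leq \gamma < \frac{n}{p_1}$ required in Theorem \ref{MT-LH} holds, with the left inequality being an equality and the right one following from $\gamma = n\left(\frac{1}{p_1}-\frac{1}{p_2}\right) < \frac{n}{p_1}$ since $\frac{1}{p_2}>0$. All the remaining restrictions on $p_1,p_2,q_1,q_2,r_1,r_2$ and $\lambda$ carry over verbatim from the statement of the corollary, so no additional verification is needed for them.

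The crux is checking the trace condition $\mu(B) \lesssim [m(B)]^{p_2\left(\frac{1}{p_1}-\frac{\gamma}{n}\right)}$ with $\mu = m$. Here the balance condition does all the work: since $\frac{\gamma}{n} = \frac{1}{p_1}-\frac{1}{p_2}$, the exponent collapses, $p_2\left(\frac{1}{p_1}-\frac{\gamma}{n}\right) = p_2 \cdot \frac{1}{p_2} = 1$, so the condition reduces to $m(B) \lesssim m(B)$, which is trivially true with constant $1$. With every hypothesis of Theorem \ref{MT-LH} now in force for $\mu = m$, that theorem yields the asserted inequality. I anticipate no genuine obstacle; the only point worth flagging is the exponent bookkeeping, namely that the Sobolev balance $\frac{1}{p_1}-\frac{1}{p_2}=\frac{\gamma}{n}$ is precisely what forces the measure exponent to equal $1$, thereby trivializing the trace requirement and converting the general trace inequality into the Hardy-Littlewood-Sobolev fractional-integration estimate.
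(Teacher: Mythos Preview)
Your proposal is correct and matches the paper's own argument: the corollary is presented there as an immediate specialization of Theorem~\ref{MT-LH} with $\mu=m$, the balance condition $\frac{1}{p_1}-\frac{1}{p_2}=\frac{\gamma}{n}$ forcing the trace exponent $p_2\left(\frac{1}{p_1}-\frac{\gamma}{n}\right)$ to equal $1$ so that the measure condition is trivially satisfied. Your verification of the remaining hypotheses is exactly the bookkeeping the paper leaves implicit.
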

In particular, if $r_i=p_i,~i=1,2$, we get the corresponding theorem for Herz spaces (cf. \cite{MR1393038}).

Let us recall the definition of Homogeneous Herz-type Sobolev spaces from \cite{MR1451091}. For consistency, we make a slight adjustment to the notation.

\begin{definition}
    Let $1<p<\infty$, $0<\lambda <n \left(1-\frac{1}{p} \right)$, $0<q<\infty$ and $k\in \mathbb{Z}_{+}$. The homogeneous Herz-type Sobolev space $\dot{K}_{\lambda,q}^{p,k}(\mathbb{R}^n)$ is defined by
    $$\dot{K}_{\lambda,q}^{p,k}(\mathbb{R}^n):= \left\{f \in \dot{K}^{p}_{\lambda,q}(\mathbb{R}^n): \text{for}~ |\beta| \leq k, \frac{\partial^{\beta}f}{\partial f^{\beta}}~\text{exists on}~ \mathcal{D}^{\prime}(\mathbb{R}^n) ~\text{and}~\frac{\partial^{\beta}f}{\partial f^{\beta}} \in \dot{K}^{p}_{\lambda,q}(\mathbb{R}^n) \right\},$$ where $\beta=(\beta_1,\beta_2, \cdots, \beta_n) \in \mathbb{Z}_{+}^n$, $\frac{\partial^{0}f}{\partial f^{0}}=f$ and the space is equipped with the functional $$\left \Vert f \right \Vert_{\dot{K}_{\lambda,q}^{p,k}(\mathbb{R}^n)}:=\sum_{|\beta| \leq k} \left \Vert \frac{\partial^{\beta}f}{\partial f^{\beta}}\right \Vert_{\dot{K}^{p}_{\lambda,q}(\mathbb{R}^n)}.$$
\end{definition}

The parameters in the above definition are subjected to specific conditions to ensure the reasonableness of definition, as outlined in \cite{MR1451091}.


\begin{theorem} \label{S-G-N-I}
Let $1<p_1<n, p_2<\infty$, $1 \leq q_1 \leq q_2<\infty$ and $0<\lambda<n-\frac{n}{p_1}$. Suppose that for every ball $B \subset \mathbb{R}^n$, we have $\mu(B) \lesssim [m(B)]^{p_2\left(\frac{1}{p_{1}}-\frac{1}{n}\right)}$. Then 
$$\left \Vert f \right \Vert_{\dot{K}_{\lambda,q_2}^{p_2}(\mathbb{R}^{n}, \mu)} \lesssim \left \Vert \nabla f \right \Vert_{\dot{K}_{\lambda,q_1}^{p_1}(\mathbb{R}^{n}, m)}$$
for every $f \in \dot{K}_{\lambda,q_1}^{p_1,1}(\mathbb{R}^n)$.
\end{theorem}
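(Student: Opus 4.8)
The plan is to deduce this Gagliardo--Nirenberg--Sobolev inequality directly from the trace theorem (Theorem \ref{MT}) by exploiting the classical pointwise domination of a function by the first-order Riesz potential of its gradient. First I would record the representation formula: for $f \in C_c^\infty(\mathbb{R}^n)$ one has the standard bound
$$|f(x)| \lesssim \int_{\mathbb{R}^n} \frac{|\nabla f(y)|}{|x-y|^{n-1}}\, dm(y) = I_{1}(|\nabla f|)(x),$$
obtained by writing $f(x)$ as the integral of its directional derivative along rays and averaging over the unit sphere. Since the Herz-type Sobolev space $\dot{K}_{\lambda,q_1}^{p_1,1}(\mathbb{R}^n)$ is built on functions whose distributional gradient lies in $\dot{K}_{\lambda,q_1}^{p_1}(\mathbb{R}^n,m)$, the first genuine task is to justify that this pointwise inequality persists for every $f$ in this space, which I would do by a standard mollification and density argument, using the decay at infinity forced by membership in the Herz space.

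Granting the pointwise bound, the remainder is an application of monotonicity of the Herz functional followed by Theorem \ref{MT} with $\gamma = 1$. Because $|f(x)| \lesssim I_{1}(|\nabla f|)(x)$ pointwise and $\|\cdot\|_{\dot{K}_{\lambda,q_2}^{p_2}(\mathbb{R}^n,\mu)}$ is monotone with respect to the pointwise absolute value, we obtain
$$\|f\|_{\dot{K}_{\lambda,q_2}^{p_2}(\mathbb{R}^n,\mu)} \lesssim \|I_{1}(|\nabla f|)\|_{\dot{K}_{\lambda,q_2}^{p_2}(\mathbb{R}^n,\mu)}.$$
I would then invoke Theorem \ref{MT} applied to the nonnegative scalar function $g = |\nabla f|$ with $\gamma = 1$.

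It remains to check that the hypotheses of Theorem \ref{MT} are in force with $\gamma = 1$. The measure condition there becomes $\mu(B) \lesssim [m(B)]^{p_2(1/p_1 - 1/n)}$, which is exactly the assumption of the present statement. The requirement $0 < \gamma < n/p_1$ reads $1 < n/p_1$, that is $p_1 < n$, which is assumed; together with $p_1 < p_2 < \infty$ this places the exponents in the admissible range. Finally, the window for $\lambda$ in Theorem \ref{MT} is $\gamma - n/p_1 < \lambda < n - n/p_1$, i.e. $1 - n/p_1 < \lambda < n - n/p_1$; since $p_1 < n$ forces $1 - n/p_1 < 0$, the hypothesis $0 < \lambda < n - n/p_1$ automatically lands $\lambda$ inside this window. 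Thus Theorem \ref{MT} yields
$$\|I_{1}(|\nabla f|)\|_{\dot{K}_{\lambda,q_2}^{p_2}(\mathbb{R}^n,\mu)} \lesssim \||\nabla f|\|_{\dot{K}_{\lambda,q_1}^{p_1}(\mathbb{R}^n,m)} = \|\nabla f\|_{\dot{K}_{\lambda,q_1}^{p_1}(\mathbb{R}^n,m)},$$
and chaining the two displays gives the claim.

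The main obstacle, I expect, is not the parameter bookkeeping but the justification of the pointwise estimate $|f| \lesssim I_{1}(|\nabla f|)$ for every element of the Herz--Sobolev space rather than merely for test functions: one must verify that the boundary contributions at infinity vanish, which relies on the decay encoded in the Herz norm and is precisely where the upper restriction $\lambda < n - n/p_1$ is used, and that mollified approximants converge in the relevant Herz quasi-norm so that the inequality passes to the limit. Once this density step is secured, the rest reduces to the elementary monotonicity of the Herz functional and the direct application of Theorem \ref{MT}.
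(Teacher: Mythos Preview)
Your proposal is correct and follows essentially the same route as the paper: the pointwise bound $|f|\lesssim I_1(|\nabla f|)$ for smooth compactly supported functions, followed by Theorem~\ref{MT} with $\gamma=1$, and then a density argument to reach all of $\dot{K}_{\lambda,q_1}^{p_1,1}(\mathbb{R}^n)$. The only minor difference is that the paper passes to the limit in the \emph{norm} inequality (using a Cauchy-sequence argument in $\dot{K}_{\lambda,q_2}^{p_2}(\mathbb{R}^n,\mu)$ based on a density result of Lu--Yang) rather than extending the pointwise estimate itself, which is the cleaner way to handle the step you correctly flag as the main obstacle.
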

\begin{proof}
First assume that $g \in \mathcal{D}(\mathbb{R}^n)$, the space of infinitely differentiable functions on $\mathbb{R}^n$ with compact support. Then it is well-known that $|g(x)| \lesssim I_1(|\nabla g|)(x)$ for every $x \in \mathbb{R}^n$. Therefore, by Theorem \ref{MT} and the ideal property of Herz spaces (\cite[Proposition 3.6]{my}), it follows that
\begin{eqnarray} \label{nabla}
\left \Vert g \right \Vert_{\dot{K}_{\lambda,q_2}^{p_2}(\mathbb{R}^n,\mu)} & \lesssim & \left \Vert I_1(|\nabla g|) \right\Vert_{\dot{K}_{\lambda,q_2}^{p_2}(\mathbb{R}^n,\mu)} \nonumber \\ & \lesssim & \left \Vert \nabla g \right \Vert_{\dot{K}_{\lambda,q_1}^{p_1}(\mathbb{R}^n,m)},
\end{eqnarray}
where $\left \Vert \nabla g \right \Vert_{\dot{K}_{\lambda,q_1}^{p_1}(\mathbb{R}^n,m)}= \left \Vert (|\nabla g |) \right \Vert_{\dot{K}_{\lambda,q_1}^{p_1}(\mathbb{R}^n,m)}$ and $|\nabla g |= \sum_{j=1}^n \Big|\frac{\partial g}{\partial x_j} \Big|$. Now let $f \in \dot{K}_{\lambda,q_1}^{p_1,1}(\mathbb{R}^n)$. Then $f \in \dot{K}_{\lambda,q_1}^{p_1}(\mathbb{R}^n)$ and $\frac{\partial f}{\partial x_j} \in \dot{K}_{\lambda,q_1}^{p_1}(\mathbb{R}^n),~j=1,2, \cdots, n$. Moreover, there exists a sequence $\{f_k\}$ in $\mathcal{D}(\mathbb{R}^n)$ such that $f_k \to f$ in $\dot{K}_{\lambda,q_1}^{p_1}(\mathbb{R}^n)$ and  $\frac{\partial f_k}{\partial x_j} \to \frac{\partial f}{\partial x_j}$ in $\dot{K}_{\lambda,q_1}^{p_1}(\mathbb{R}^n)$ \cite[Proposition 2.1]{MR1451091}. Therefore, by equation (\ref{nabla}), we get
$$\left \Vert  f_k-f_l \right \Vert_{\dot{K}_{\lambda,q_2}^{p_2}(\mathbb{R}^n,\mu)} \lesssim \left \Vert \sum_{j=1}^n \Big|\frac{\partial f_k}{\partial x_j} -\frac{\partial f_l}{\partial x_j} \Big| \right \Vert_{\dot{K}_{\lambda,q_1}^{p_1}(\mathbb{R}^n,m)},$$
from which it follows that the sequence $\{f_k\}$ converges to $f$ in $\dot{K}_{\lambda,q_2}^{p_2}(\mathbb{R}^n,\mu)$. This completes the proof.
\end{proof}

The repeated application of the pointwise estimate, $|g(x)| \lesssim I_1(|\nabla g|)(x)$ in combination with semigroup property $I_{\alpha}I_{\beta}=I_{\alpha+\beta}$, ensures that the above inequality holds for higher order Sobolev-type Herz spaces as well. 

\begin{theorem}
Let $k \in \mathbb{N}$, $1<p_1<\frac{n}{k}, p_2<\infty$, $1 \leq q_1 \leq q_2<\infty$, $0<\lambda<n-\frac{n}{p_1}$. If $\mu(B) \lesssim [m(B)]^{p_2\left(\frac{1}{p_{1}}-\frac{k}{n}\right)}$ for every ball $B \subset \mathbb{R}^n$, then 
$$\left \Vert f \right \Vert_{\dot{K}_{\lambda,q_2}^{p_2}(\mathbb{R}^{n}, \mu)} \lesssim \left \Vert \nabla^k f \right \Vert_{\dot{K}_{\lambda,q_1}^{p_1}(\mathbb{R}^{n}, m)}$$
for every $f \in \dot{K}_{\lambda,q_1}^{p_1,k}(\mathbb{R}^n)$.
\end{theorem}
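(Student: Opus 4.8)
The plan is to mimic the proof of Theorem \ref{S-G-N-I}, replacing the first-order estimate $|g(x)| \lesssim I_1(|\nabla g|)(x)$ by its $k$-th order analogue $|g(x)| \lesssim I_k(|\nabla^k g|)(x)$ and then invoking Theorem \ref{MT} with $\gamma = k$, where throughout $|\nabla^j g| := \sum_{|\beta|=j} |\partial^\beta g|$. I would first prove the theorem for $g \in \mathcal{D}(\mathbb{R}^n)$ and afterwards remove smoothness by density, exactly as before. The only genuinely new ingredient is the iterated pointwise bound, so that is where I would concentrate the work.

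I would establish the pointwise estimate by iterating the first-order bound. Applying $|h| \lesssim I_1(|\nabla h|)$ to $h = \partial^\alpha g$ with $|\alpha| = j$, summing over such $\alpha$, and using the positivity of $I_1$ gives $|\nabla^j g| \lesssim I_1(|\nabla^{j+1} g|)$ for each $j \ge 0$, the passage to $|\nabla^{j+1} g|$ absorbing only a combinatorial constant that counts multi-index multiplicities. Telescoping these bounds and invoking the semigroup identity $I_1 I_{j} = I_{j+1}$ together with the monotonicity of each $I_1$ then yields $|g| \lesssim I_1(|\nabla g|) \lesssim I_2(|\nabla^2 g|) \lesssim \cdots \lesssim I_k(|\nabla^k g|)$. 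Since $k < n/p_1 < n$, the operators $I_j$ for $j \le k$ are well defined and the semigroup identity is valid on these rapidly decaying functions.

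Next I would verify that Theorem \ref{MT} applies with $\gamma = k$. The hypothesis $p_1 < n/k$ gives $0 < k < n/p_1$; the assumed measure condition is exactly $\mu(B) \lesssim [m(B)]^{p_2(1/p_1 - k/n)}$; and because $k < n/p_1$ we have $k - n/p_1 < 0 < \lambda < n - n/p_1$, so the required range $\gamma - n/p_1 < \lambda < n - n/p_1$ holds. Combining the pointwise estimate, the ideal property of Herz spaces (\cite[Proposition 3.6]{my}), and Theorem \ref{MT} with input $|\nabla^k g|$, I obtain for $g \in \mathcal{D}(\mathbb{R}^n)$ that $\|g\|_{\dot{K}_{\lambda,q_2}^{p_2}(\mathbb{R}^n,\mu)} \lesssim \|I_k(|\nabla^k g|)\|_{\dot{K}_{\lambda,q_2}^{p_2}(\mathbb{R}^n,\mu)} \lesssim \|\nabla^k g\|_{\dot{K}_{\lambda,q_1}^{p_1}(\mathbb{R}^n,m)}$.

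Finally I would pass to general $f \in \dot{K}_{\lambda,q_1}^{p_1,k}(\mathbb{R}^n)$ by density, as in Theorem \ref{S-G-N-I}: choosing $\{f_m\} \subset \mathcal{D}(\mathbb{R}^n)$ with $\partial^\beta f_m \to \partial^\beta f$ in $\dot{K}_{\lambda,q_1}^{p_1}$ for all $|\beta| \le k$ (the higher-order analogue of \cite[Proposition 2.1]{MR1451091}), applying the established inequality to $f_m - f_l$ shows $\{f_m\}$ is Cauchy, hence convergent, in $\dot{K}_{\lambda,q_2}^{p_2}(\mathbb{R}^n,\mu)$, and the inequality survives the limit. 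The main obstacle is the careful justification of the iterated pointwise estimate, in particular controlling the multi-index multiplicities and confirming that the semigroup identity $I_1 I_{k-1} = I_k$ is legitimate on the relevant function class; once that is in place, everything downstream is a routine reprise of the first-order argument.
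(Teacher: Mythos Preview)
Your proposal is correct and follows essentially the same route as the paper, which simply remarks that repeated application of the pointwise estimate $|g|\lesssim I_1(|\nabla g|)$ together with the semigroup property $I_\alpha I_\beta=I_{\alpha+\beta}$ yields $|g|\lesssim I_k(|\nabla^k g|)$, after which the argument of Theorem~\ref{S-G-N-I} goes through verbatim with $\gamma=k$. Your write-up is in fact more careful than the paper's, explicitly verifying the parameter constraints for Theorem~\ref{MT} and spelling out the density step.
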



We say $p^{*}$ is the $k$-Sobolev conjugate of $p$ if $\frac{1}{p^{*}}=\frac{1}{p}-\frac{k}{n}$, where $k$ is a positive integer. We simply write Sobolev conjugate for 1-Sobolev conjugate.
Putting $\mu=m$ in the above theorem, we get the following Sobolev embedding theorem for Herz-type Sobolev spaces.
\begin{corollary} Let $k \in \mathbb{N}$, $1<p<\frac{n}{k}$, $1 \leq q < \infty$, $0<\lambda<n-\frac{n}{p}$ and $p^{*}$ be the $k$-Sobolev conjuage of $p$. Then 
$$\left \Vert f \right \Vert_{\dot{K}_{\lambda,q}^{p^{*}}(\mathbb{R}^{n}, m)} \lesssim \left \Vert \nabla^k f \right \Vert_{\dot{K}_{\lambda,q}^{p}(\mathbb{R}^{n}, m)}$$
for every $f \in \dot{K}_{\lambda,q}^{p,k}(\mathbb{R}^n)$. In  particular, $\dot{K}_{\lambda,q}^{p,k}(\mathbb{R}^n) \hookrightarrow \dot{K}_{\lambda,q}^{p^{*}}(\mathbb{R}^{n})$.
\end{corollary}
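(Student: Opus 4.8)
The plan is to derive this corollary as the $\mu = m$ specialization of the preceding theorem, so that the entire argument reduces to checking that the hypotheses of that theorem are met and that its measure-growth condition becomes vacuous under this choice. Accordingly, I would set $p_1 = p$, $p_2 = p^{*}$, $q_1 = q_2 = q$, and take $\mu$ to be the Lebesgue measure $m$. First I would confirm admissibility of the parameters: the defining relation $\frac{1}{p^{*}} = \frac{1}{p} - \frac{k}{n}$ together with $1 < p < \frac{n}{k}$ forces $\frac{1}{p} > \frac{k}{n}$, hence $0 < \frac{1}{p^{*}} < \frac{1}{p}$, so that $p < p^{*} < \infty$ and the ordering $1 < p_1 < p_2 < \infty$ required by the theorem holds. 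The remaining constraints $1 < p < \frac{n}{k}$, $1 \leq q < \infty$, and $0 < \lambda < n - \frac{n}{p}$ are exactly the hypotheses imposed on $(p,q,\lambda)$, so nothing further is needed there.

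The key observation is that the growth condition $\mu(B) \lesssim [m(B)]^{p_2\left(\frac{1}{p_1} - \frac{k}{n}\right)}$ collapses to a triviality when $\mu = m$. Indeed, by the very definition of the $k$-Sobolev conjugate,
$$p_2\left(\frac{1}{p_1} - \frac{k}{n}\right) = p^{*}\left(\frac{1}{p} - \frac{k}{n}\right) = p^{*} \cdot \frac{1}{p^{*}} = 1,$$
so the required bound reads $m(B) \lesssim m(B)$, which holds with constant $1$ for every ball $B \subset \mathbb{R}^n$. Hence the preceding theorem applies verbatim and yields $\left\Vert f \right\Vert_{\dot{K}_{\lambda,q}^{p^{*}}(\mathbb{R}^n, m)} \lesssim \left\Vert \nabla^k f \right\Vert_{\dot{K}_{\lambda,q}^{p}(\mathbb{R}^n, m)}$ for every $f \in \dot{K}_{\lambda,q}^{p,k}(\mathbb{R}^n)$, which is the asserted inequality.

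Finally, to obtain the embedding statement I would bound the right-hand side by the full Sobolev norm. Writing $\nabla^k f = \sum_{|\beta| = k} \big|\partial^{\beta} f\big|$ and using that the functional $\left\Vert \cdot \right\Vert_{\dot{K}_{\lambda,q}^{p}}$ is subadditive for $1 \leq p < \infty$ and $1 \leq q < \infty$, the triangle inequality gives
$$\left\Vert \nabla^k f \right\Vert_{\dot{K}_{\lambda,q}^{p}} \leq \sum_{|\beta|=k} \left\Vert \partial^{\beta} f \right\Vert_{\dot{K}_{\lambda,q}^{p}} \leq \sum_{|\beta| \leq k} \left\Vert \partial^{\beta} f \right\Vert_{\dot{K}_{\lambda,q}^{p}} = \left\Vert f \right\Vert_{\dot{K}_{\lambda,q}^{p,k}},$$
since the order-$k$ multi-indices form a subcollection of those appearing in the Sobolev norm. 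Combining this with the displayed inequality yields $\left\Vert f \right\Vert_{\dot{K}_{\lambda,q}^{p^{*}}} \lesssim \left\Vert f \right\Vert_{\dot{K}_{\lambda,q}^{p,k}}$, i.e. the continuous inclusion $\dot{K}_{\lambda,q}^{p,k}(\mathbb{R}^n) \hookrightarrow \dot{K}_{\lambda,q}^{p^{*}}(\mathbb{R}^n)$. There is no genuine analytic obstacle in this corollary; the only point demanding care is the bookkeeping verification that the exponent $p_2\left(\frac{1}{p_1} - \frac{k}{n}\right)$ equals exactly $1$, which is precisely what renders the Lebesgue-measure growth condition self-satisfying.
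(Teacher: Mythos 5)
Your proposal is correct and takes essentially the same approach as the paper, which obtains this corollary simply by setting $\mu = m$ in the preceding higher-order theorem; your computation that $p_2\left(\frac{1}{p_1}-\frac{k}{n}\right) = p^{*}\cdot\frac{1}{p^{*}} = 1$, making the measure-growth condition trivially satisfied, is exactly the point the paper leaves implicit. Your additional triangle-inequality step bounding $\left\Vert \nabla^k f \right\Vert_{\dot{K}_{\lambda,q}^{p}}$ by the full Sobolev norm $\left\Vert f \right\Vert_{\dot{K}_{\lambda,q}^{p,k}}$ correctly supplies the routine detail behind the embedding statement.
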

Finally, we prove the following Gagliardo-Nirenberg-Sobolev (GNS) inequality in the setting of Herz spaces.
\begin{theorem}
Let $0 \leq \theta \leq 1$, $1<p_0<n$, $1 \leq p_0,p_1 < p_2<\infty$, $1 \leq q_0, q_1 < q_2<\infty$ and $0<\lambda<n-\frac{n}{p_0}$. Suppose that $\mu(B) \lesssim [m(B)]^{p_2\left(\frac{1}{p_{0}}-\frac{1}{n}\right)}$ for every ball $B \subset \mathbb{R}^n$. Then $$\left \Vert f \right \Vert_{\dot{K}_{\lambda,q}^{p}(\mathbb{R}^n, \mu)} \leq \left \Vert f \right \Vert^{1-\theta}_{\dot{K}_{\lambda,q_1}^{p_1}(\mathbb{R}^n, \mu)} \left \Vert \nabla f \right \Vert^{\theta}_{\dot{K}_{\lambda,q_0}^{p_0}(\mathbb{R}^n, m)}$$ for every $f \in \dot{K}_{\lambda,q_1}^{p_1}(\mathbb{R}^n, \mu) \cap \dot{K}_{\lambda,q_0}^{p_0,1}(\mathbb{R}^n,m)$, provided that $\frac{1}{q}=\frac{1-\theta}{q_1} + \frac{\theta}{q_2}$ and $\frac{1}{p}=\frac{1-\theta}{p_1} + \frac{\theta}{p_2}$.
\end{theorem}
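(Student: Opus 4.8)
The plan is to derive the inequality by combining two ingredients: the endpoint Sobolev estimate already furnished by Theorem~\ref{S-G-N-I}, and an interpolation inequality internal to the homogeneous Herz scale. First I would observe that, after relabelling $p_0,q_0$ as the parameters ``$p_1,q_1$'' of that theorem, the present hypotheses match it exactly: $1<p_0<n$, $p_0<p_2<\infty$, $q_0<q_2$, $0<\lambda<n-\frac{n}{p_0}$, and the measure condition $\mu(B)\lesssim[m(B)]^{p_2(1/p_0-1/n)}$ are all in force. Applying Theorem~\ref{S-G-N-I} therefore yields the endpoint bound
\begin{equation}\label{gns-endpoint}
\left\Vert f \right\Vert_{\dot{K}_{\lambda,q_2}^{p_2}(\mathbb{R}^n,\mu)} \lesssim \left\Vert \nabla f \right\Vert_{\dot{K}_{\lambda,q_0}^{p_0}(\mathbb{R}^n,m)}
\end{equation}
for every $f\in\dot{K}_{\lambda,q_0}^{p_0,1}(\mathbb{R}^n)$, which corresponds to the case $\theta=1$ of the assertion.

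The second ingredient is the interpolation inequality
\begin{equation}\label{gns-interp}
\left\Vert f \right\Vert_{\dot{K}_{\lambda,q}^{p}(\mathbb{R}^n,\mu)} \leq \left\Vert f \right\Vert_{\dot{K}_{\lambda,q_1}^{p_1}(\mathbb{R}^n,\mu)}^{1-\theta}\left\Vert f \right\Vert_{\dot{K}_{\lambda,q_2}^{p_2}(\mathbb{R}^n,\mu)}^{\theta},
\end{equation}
valid whenever $\frac{1}{p}=\frac{1-\theta}{p_1}+\frac{\theta}{p_2}$ and $\frac{1}{q}=\frac{1-\theta}{q_1}+\frac{\theta}{q_2}$. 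To establish \eqref{gns-interp} I would work annulus by annulus and then pass to the $\ell^q$ sum. On a fixed $\Omega_t$, the log-convexity of the $L^p(\mu)$ norms (ordinary H\"older interpolation) gives $\|f\chi_{\Omega_t}\|_{L^p(\mu)}\le\|f\chi_{\Omega_t}\|_{L^{p_1}(\mu)}^{1-\theta}\|f\chi_{\Omega_t}\|_{L^{p_2}(\mu)}^{\theta}$. Writing the dyadic weight as $2^{t\lambda}=(2^{t\lambda})^{1-\theta}(2^{t\lambda})^{\theta}$ and setting $a_t:=2^{t\lambda}\|f\chi_{\Omega_t}\|_{L^{p_1}(\mu)}$, $b_t:=2^{t\lambda}\|f\chi_{\Omega_t}\|_{L^{p_2}(\mu)}$, one obtains $2^{t\lambda}\|f\chi_{\Omega_t}\|_{L^p(\mu)}\le a_t^{1-\theta}b_t^{\theta}$. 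Raising to the power $q$, summing in $t$, and applying H\"older's inequality for sequences with the conjugate pair $u=\frac{q_1}{(1-\theta)q}$ and $v=\frac{q_2}{\theta q}$ yields the bound: the relation $\frac{1}{q}=\frac{1-\theta}{q_1}+\frac{\theta}{q_2}$ is precisely what makes $\frac{1}{u}+\frac{1}{v}=1$, and the same relation forces $u,v\ge 1$ (since $\frac{1-\theta}{q_1}\le\frac1q$ and $\frac{\theta}{q_2}\le\frac1q$), so the step is legitimate. Recognising $\sum_t a_t^{q_1}$ and $\sum_t b_t^{q_2}$ as the $q_1$- and $q_2$-th powers of the two Herz norms and taking the $q$-th root delivers \eqref{gns-interp}.

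Finally, I would substitute \eqref{gns-endpoint} into the $\theta$-power factor of \eqref{gns-interp}, which produces the stated estimate (with an implied constant). The degenerate endpoints must be treated separately, as the exponents $u,v$ are defined only for $0<\theta<1$: at $\theta=0$ the claim is the tautology $\|f\|_{\dot{K}_{\lambda,q_1}^{p_1}(\mu)}\le\|f\|_{\dot{K}_{\lambda,q_1}^{p_1}(\mu)}$, and at $\theta=1$ it collapses to \eqref{gns-endpoint}. I expect the only genuinely delicate point to be the exponent bookkeeping in the sequence-level H\"older step -- confirming simultaneously that $\frac1u+\frac1v=1$ and that both $u,v\ge1$ -- while keeping the two interpolation identities for $p$ and for $q$ aligned throughout; the remaining manipulations are a routine assembly of standard inequalities.
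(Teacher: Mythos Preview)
Your proposal is correct and follows essentially the same route as the paper: first establish the Herz-space interpolation inequality via annulus-wise $L^p$ log-convexity together with H\"older's inequality on the $\ell^q$ sum, and then replace the $\dot{K}_{\lambda,q_2}^{p_2}(\mu)$ factor by $\|\nabla f\|_{\dot{K}_{\lambda,q_0}^{p_0}(m)}$ using Theorem~\ref{S-G-N-I}. You supply more detail than the paper on the exponent bookkeeping and the endpoints $\theta\in\{0,1\}$, but the architecture is identical.
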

\begin{proof}
Let $0\leq \theta \leq 1$ and $1\leq p_1<p_2 \leq \infty$. Using the interpolation inequality $\|f\|_{L^p(\mathbb{R}^n, \nu)} \leq \|f\|^{1-\theta}_{L^{p_1}(\mathbb{R}^n, \nu)} \|f\|^{\theta}_{L^{p_2}(\mathbb{R}^n, \nu)}$, which holds for any positive measure $\nu$ on $\mathbb{R}^n$, provided that $\frac{1}{p}=\frac{1-\theta}{p_1} + \frac{\theta}{p_2}$, and the H\"older inequality, we obtain
\begin{eqnarray*}
\sum_{t \in \mathbb{Z}}2^{t \lambda q} \|f \chi_{\Omega_t}\|^q_{L^p(\mathbb{R}^n, \nu)} 
& \leq & \left(\sum_{t \in \mathbb{Z}}2^{t \lambda q_1}\|f \chi_{\Omega_t}\|^{q_1}_{L^{p_1}(\mathbb{R}^n, \nu)} \right)^{\frac{q(1-\theta)}{q_1}} \left(\sum_{t \in \mathbb{Z}}2^{t \lambda q_2}\|f \chi_{\Omega_t}\|^{q_2}_{L^{p_2}(\mathbb{R}^n, \nu)} \right)^{\frac{q \theta}{q_2}}.
\end{eqnarray*}
Consequently, $$\left \Vert f \right \Vert_{\dot{K}_{\lambda,q}^{p}(\mathbb{R}^n, \nu)} \leq \left \Vert f \right \Vert^{1-\theta}_{\dot{K}_{\lambda,q_1}^{p_1}(\mathbb{R}^n, \nu)} \left \Vert f \right \Vert^{\theta}_{\dot{K}_{\lambda,q_2}^{p_2}(\mathbb{R}^n, \nu)}.$$
Using Theorem \ref{S-G-N-I}, it follows that
$$\left \Vert f \right \Vert_{\dot{K}_{\lambda,q}^{p}(\mathbb{R}^n, \mu)} \leq \left \Vert f \right \Vert^{1-\theta}_{\dot{K}_{\lambda,q_1}^{p_1}(\mathbb{R}^n, \mu)} \left \Vert \nabla f \right \Vert^{\theta}_{\dot{K}_{\lambda,q_0}^{p_0}(\mathbb{R}^n, m)}.$$
\end{proof}
\begin{remark}
\begin{itemize}
\item[(i)] If $q \leq p$, the definition of $\dot{K}_{\lambda,q}^{p,k}(\mathbb{R}^n)$ remains reasonable even when $\lambda=0$. Evidently, all the aforementioned results (Theorem \ref{S-G-N-I} onwards) are still true when $\lambda=0$ and $q_i \leq p_i~ (i=0,1,2)$.
\item[(ii)] If $\lambda=0$, $\mu=m$, $p_i=q_i$ for $i=0,1,2$, and $p_2$ is the Sobolev conjugate of $p_0$, we obtain the GNS inequality for the Lebesgue spaces (see \cite[Section 1]{MR3989143}).
\end{itemize}
\end{remark}

%
%
%
%
%


\section*{Acknowledgements}

The first author (M. Ashraf Bhat) is thankful to Prime Minister's Research
Fellowship (PMRF) program for the fellowship (PMRF ID: 2901480).

\section*{Declarations}
\subsection*{Data Availability}

No data were used to support this study.

\subsection*{Conflicts of Interest}

The authors declare that there is no conflict of interest.

\subsection*{Funding}

This research work did not receive any external funding.

\subsection*{Authors' Contributions}
The authors contributed equally to the writing of this paper. All the authors read and approved the manuscript.

\end{document}